\numberwithin{equation}{section}
\newtheorem{theorem}{Theorem}[section]
\newtheorem{proposition}[theorem]{Proposition}
\theoremstyle{definition}
\theoremstyle{definition} 
\newcommand{\bea}{\begin{eqnarray}}
\newcommand{\eea}{\end{eqnarray}}
\newcommand{\beas}{\begin{eqnarray*}}
\newcommand{\eeas}{\end{eqnarray*}}
\newcommand{\beq}{\begin{equation}}
\newcommand{\eeq}{\end{equation}}
\def\ep{\varepsilon}
\newcommand{\cA}{\mathcal A}
\newcommand{\cB}{\mathcal B}
\newcommand{\cE}{\mathcal E}
\newcommand{\cF}{\mathcal F}
\newcommand{\cN}{\mathcal N}
\newcommand{\cMH}{\mathcal{MH}}
\newcommand{\cSM}{\mathcal{SM}}
\newcommand{\C}{\mathbb C}
\newcommand{\R}{\mathbb R}
\newcommand{\RR}{\mathbb R}
\newcommand{\lj}{[\![}
\newcommand{\rj}{]\!]}
\newcommand{\st}{\,;\,}
\DeclareMathSymbol{\complement}{\mathord}{AMSa}{"7B}
\def\vv<#1>{\langle #1\rangle}
\def\Vv<#1>{\bigl\langle #1\bigr\rangle}
\begin{document}

\frenchspacing

\title[On the Muskat problem]
{On the Muskat problem}

\author[J.~Pr\"uss]{Jan Pr\"uss}
\address{Institut f\"ur Mathematik \\
         Martin-Luther-Universit\"at Halle-Witten\-berg\\
         Theodor-Lieser-Strasse 5\\
         D-60120 Halle, Germany}
\email{jan.pruess@mathematik.uni-halle.de}

\author[G.~Simonett]{Gieri Simonett}
\address{Department of Mathematics\\
         Vanderbilt University \\
         Nashville, TN~37240, USA}
\email{gieri.simonett@vanderbilt.edu}

\thanks{The research of G.S.\ was partially
supported NSF (DMS-0600870).}

\subjclass[2010]{Primary: 35R35, 25R37, 35B35, 35K55, 35Q35, 76E17; Secondary: 76S05, 80A22}
 \keywords{Muskat problem, free boundary problem, porous medium, Darcy's law, phase transition, 
 Lyapunov function, normally stable, normally hyperbolic}

\begin{abstract}
Of concern is the motion of two fluids separated by a free interface in a porous medium,
where the velocities are given by Darcy's law.
We consider the case with and without phase transition.
It is shown that the resulting models can be understood as purely geometric evolution laws,
where the motion of the separating interface depends in a non-local way on the mean curvature.
It turns out that the models are volume preserving and surface area reducing,
the latter property giving rise to a Lyapunov function. 
We show well-posedness of the models, characterize all equilibria,
and study the dynamic stability of the equilibria.
Lastly, we show that solutions which do not develop singularities exist globally
and converge exponentially fast to an equilibrium.
\end{abstract}
\maketitle

\vspace{-.5cm}
\section{Introduction}
The Muskat flow models the evolution of the interface between two  fluids in a porous medium
and  was introduced by Muskat~\cite{Mus34} in 1934, see also~\cite{MuWy37}. 

Suppose that two  fluids, fluid$_1$ and fluid$_2$, occupy the bounded regions $\Omega_1(t)$ and $\Omega_2(t)$ in $\R^n$
such that $\Omega_i(t)\subset\Omega$ and
$\bar\Omega_1(t)\cup\bar\Omega_2(t)=\bar\Omega$. 
Let $\Gamma(t)=\partial\Omega_1(t)$ denote the interface separating the fluids.
In the following we assume that $\Omega_2(t)$, called the continuous phase, is in contact with $\partial\Omega$,
while $\Omega_1(t)$, the disperse phase, is not.
Moreover, $\nu_\Gamma=\nu_\Gamma(t,\cdot)$ 
denotes the unit normal field on $\Gamma(t)$, pointing into $\Omega_2(t)$,
see Figure 1 for the geometric setting. 

Let $u_i$ be the velocity, $\pi_i$ the pressure, $\varrho_i$ the density, 
and $\mu_i$ the viscosity of fluid$_i$, respectively.
Moreover, let $u_\Gamma$ denote the velocity of $\Gamma=\{\Gamma(t):t\ge 0\}$ and $V_\Gamma:=u_\Gamma\cdot\nu_\Gamma$ 
the corresponding normal velocity (in the direction of $\nu_\Gamma$).
If there are no sources of mass in the bulk then conservation of mass is given by the {\em continuity equation}
\begin{equation*}
\label{consmass}
\partial_t \varrho_i + {\rm div}\, (\varrho_i u_i)=0\quad  \mbox{in}\;\;\Omega_i(t).
\end{equation*}
If there is no surface mass on $\Gamma(t)$, we also have the jump condition
\begin{equation}
\label{consmassif}
[\![\varrho(u-u_\Gamma)\cdot\nu_\Gamma]\!] =0 \quad \mbox{on}\;\; \Gamma(t),
\end{equation}
where $[\![\phi]\!]=\phi_2|_{\Gamma(t)}-\phi_1|_{\Gamma(t)}$ denotes the jump of the continuous quantity $\phi$, 
defined on $\Omega_1(t)\cup\Omega_2(t)$, across $\Gamma(t)$.
\begin{figure}
\centering
\includegraphics[width=7cm, height=5cm]{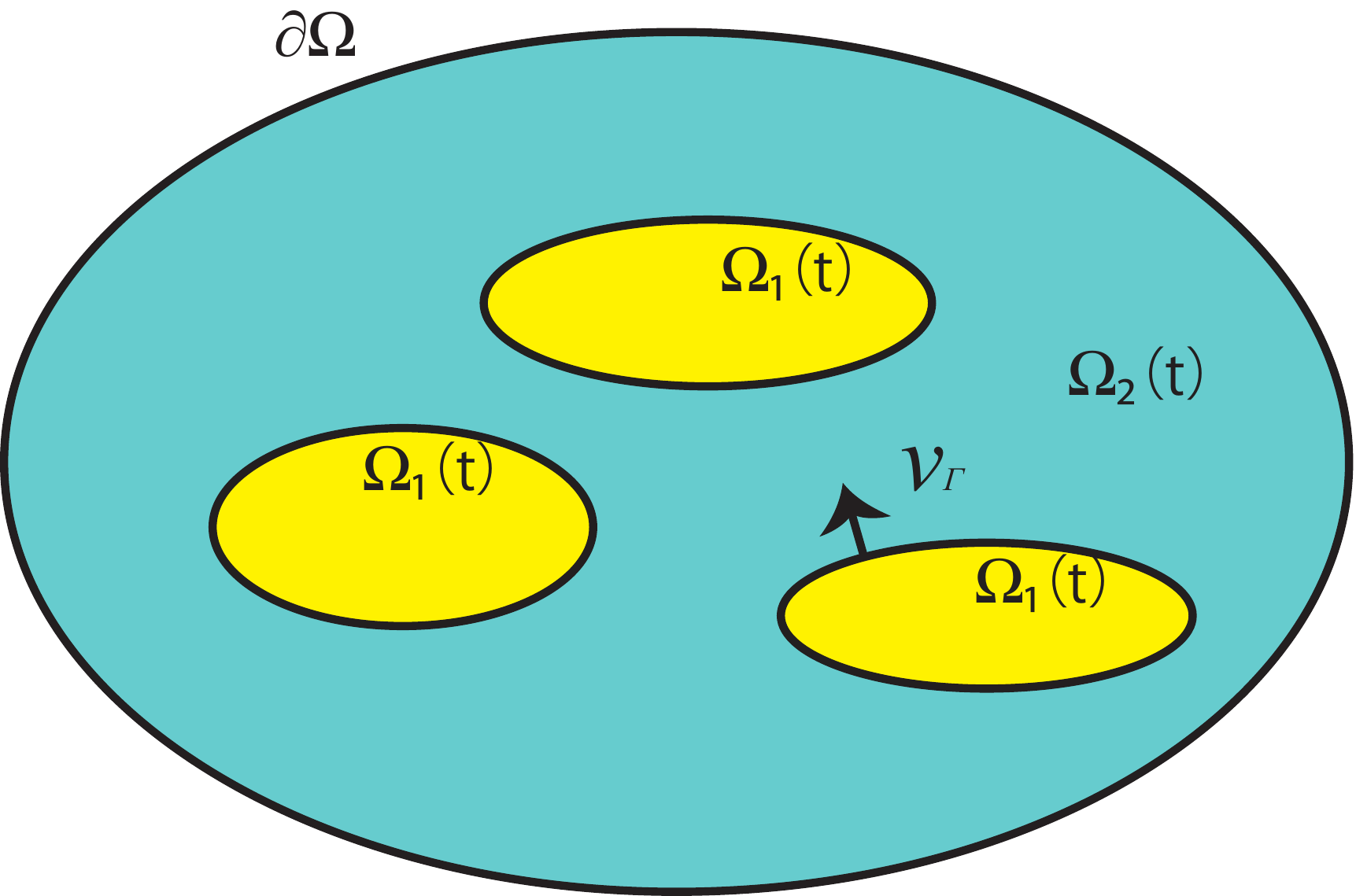}
\caption{A typical geometry}
\end{figure}
The interfacial mass flux $j_\Gamma$,
{\em phase flux} for short, is defined by means of
\begin{align}
\label{phaseflux}
 j_\Gamma:= \varrho(u-u_\Gamma)\cdot\nu_\Gamma.
 \end{align}
We note that $j_\Gamma$ is well-defined, as \eqref{consmassif} shows.
If $j_\Gamma\equiv 0$ then 
\begin{equation*}
\label{V-no transition}
V_\Gamma = u_\Gamma\cdot\nu_\Gamma = u_i\cdot\nu_\Gamma,
\end{equation*}
and in this case, the interface $\Gamma(t)$ is advected with the velocity field $u$.
On the other hand, if $j_\Gamma\not\equiv0$, {\em phase transition} occurs, and 
the normal velocity can then be expressed as
\begin{equation*}
\label{V-transtion}
\lj\varrho\rj V_\Gamma= [\![\varrho u\cdot \nu_\Gamma]\!].
\end{equation*}
In this case we will always assume that $\varrho_1\neq \varrho_2$.
In the following, we will only consider the completely incompressible case where
$\varrho_i>0$ is constant.

Modeling flows in porous media often relies on {\em Darcy's law}, which reads
\begin{equation}
\label{Darcy}
 u_i = -\frac{\kappa}{\mu_i} \nabla\pi_i, \quad i=1,2,
\end{equation}
where $\kappa>0$ is the permeability of the porous medium;
to shorten notation, we set $k_i={\kappa}/{\mu_i}$, $i=1,2$.

If no phase transition takes place we obtain from Darcy's law
\begin{equation}
\label{no-jump}
 [\![u\cdot\nu_\Gamma]\!]= -[\![k\partial_\nu \pi]\!]=0,
\end{equation}
and the normal velocity is then given by
\begin{equation}
\label{V-Gamma}
V_\Gamma = u\cdot\nu_\Gamma = - k\partial_\nu \pi.
\end{equation}
By \eqref{no-jump}, the right hand side of \eqref{V-Gamma} does not depend on the phases, and hence 
the expression for $V_\Gamma$ is  unambiguous.

In case of phase transition, the normal velocity is given by
\begin{equation}
\label{V-tansition}
\lj \varrho\rj V_\Gamma =\lj \varrho u\cdot \nu_\Gamma \rj= -\lj \varrho k\partial_\nu \pi\rj.
\end{equation}

Finally, we assume that the capillary pressure $\pi_c=[\![\pi]\!]$ is given by
\begin{equation}
\label{pressure}
[\![\pi]\!] =\sigma H_\Gamma,
\end{equation}
where $H_\Gamma =-{\rm div}_{\Gamma}\nu_\Gamma$ denotes the $(n-1)$-fold mean curvature, that is, the sum
of the principal curvatures of $\Gamma(t)$, and $\sigma>0$ is the surface tension. 
Here the convention is that $H_\Gamma= -(n-1)/R$ for a sphere of radius $R$ in $\R^n$.

The resulting problem in the case without phase transition is the well-known 
  {\em Muskat problem}, or  {\em Muskat flow}, 
which is given by
\begin{equation}
\label{Mu}
\begin{aligned}
 \Delta\pi&=0
&&\mbox{in} &&\Omega\setminus \Gamma(t),\\
\partial_\nu \pi & =0  &&\mbox{on} &&\partial\Omega,\\
[\![\pi]\!] &=\sigma H_\Gamma &&\mbox{on} &&\Gamma(t),\\
[\![k\partial_\nu \pi]\!]&=0 &&\mbox{on} &&\Gamma(t),\\
V_\Gamma &= -k\partial_\nu\pi  &&\mbox{on} && \Gamma(t),\\
\Gamma(0)&=\Gamma_0.  && &&
\end{aligned}
\end{equation}
If $j_\Gamma\neq 0$ and $\varrho_1\neq \varrho_2$, 
we obtain
the {\em Muskat flow with phase transition}
\begin{equation}
\label{MuT}
\begin{aligned}
 \Delta \pi &=0
&&\mbox{in} && \Omega\setminus \Gamma(t), \\
\partial_\nu \pi & =0 && \mbox{on} && \partial\Omega, \\
[\![\pi/\varrho]\!]&=0 &&\mbox{on} && \Gamma(t),\\
[\![\pi]\!] &=\sigma H_\Gamma &&\mbox{on} && \Gamma(t), \\
[\![\varrho]\!]V_\Gamma &=-[\![\varrho k\partial_\nu\pi]\!]&&\mbox{on} && \Gamma(t), \\
\Gamma(0) &=\Gamma_0. &&
\end{aligned}
\end{equation}
see \cite[Chapter 1]{PrSi16} for a derivation.
In fact, in \cite{PrSi16} the more general situation where the motion of the fluids is governed by 
the Navier-Stokes equations is also considered.

For later use we note that the
scaled function $p=\pi/\varrho$, with $\pi$ a solution of \eqref{MuT}, satisfies the equivalent problem
\begin{equation}
\label{MuT-scaled}
\begin{aligned}
 \Delta p &=0 &&\mbox{in} && \Omega\setminus \Gamma(t), \\
\partial_\nu p & =0 && \mbox{on} && \partial\Omega, \\
[\![p]\!] &=0 &&\mbox{on} && \Gamma(t), \\\
        p &=\frac{\sigma}{[\![\varrho]\!]} H_\Gamma &&\mbox{on} && \Gamma(t),\\
[\![\varrho]\!]V_\Gamma &= - [\![\varrho^2 k\partial_\nu p ]\!] && \mbox{on} && \Gamma(t), \\
\Gamma(0) &=\Gamma_0. &&
\end{aligned}
\end{equation}

In more generality, one can  also consider the case where $\kappa_i$
depends on the pressure $\pi_i$.
 A variant of Darcy's law is {\em Forchheimer's law}, which reads
$$ g(|u_i|)u_i =-\frac{\kappa(\pi_i)}{\mu_i}\nabla\pi_i,\quad i=1,2,$$
where the function $g$ is strictly positive and $s\mapsto sg(s)$ is strictly increasing. 
Solving this equation for $u_i$ one obtains
\begin{equation}
\label{Forchheimer}
 u_i= -k_i(\pi_i,|\nabla\pi_i|^2)\nabla \pi_i,\quad i=1,2,
\end{equation}
where $k_i$ is strictly positive and satisfies $k_i(p,s)+2s\partial_2k_i(p,s)>0$
for $p\in\R$ and $s\ge 0$. 
These conditions ensure strong ellipticity of the second-oder differential operator
$$-{\rm div}(k_i(\pi_i,|\nabla\pi_i|^2)\nabla\,\cdot) \quad  \mbox{in}\;\; \Omega_i. $$
In case of non-constant densities $\varrho=\varrho(\pi)$,  
the first line in \eqref{Mu} and \eqref{MuT} ought to be replaced by
\begin{equation*}
\label{Verigin}
\partial_t \varrho(\pi) - {\rm div} \big(\varrho(\pi)k(\pi,|\nabla \pi|^2) \nabla\pi\big)=0,
\quad \pi(0)=\pi_0.
\end{equation*}
The resulting model is known as the {\em Verigin problem} (with phase transition in case $j_\Gamma\neq 0$.)
This problem is studied in \cite{PrSi16b}.

It will be shown in Section 3 that problems \eqref{Mu} and \eqref{MuT}
can be cast as a geometric evolution equation 
\begin{equation*}
\label{geo}
V_\Gamma=\sigma G_\Gamma H_\Gamma,\quad t>0,\quad \Gamma(0)=\Gamma_0,
\end{equation*}
where one aims to find a (sufficiently smooth) family of hypersurfaces 
$\Gamma(t)\subset\Omega$ which enclose a domain $\Omega_1(t)$.
Here $G_\Gamma: W^{3/2}_2(\Gamma)\to W^{1/2}_2(\Gamma)$ is linear
and positive semi-definite with respect to the inner product of $L_2(\Gamma)$.

Suppose that the disperse region $\Omega_1$ consists of $m\ge 1$ connected components 
$\Omega_{1,j}$, that is, $\Omega_1=\cup_{j=1}^m \Omega_{1,j}$, while $\Omega_2$ is connected,
see Figure 1. 
Let $\cE$ denote the set of equilibria for \eqref{Mu} and \eqref{MuT}.
\begin{theorem}
\label{conserved}
The Muskat flows \eqref{Mu} and \eqref{MuT}
enjoy the following properties:
\begin{enumerate}
\item[{\bf (a)}] The volumes $|\Omega_{1,j}|$ are preserved for \eqref{Mu},
while the volume $|\Omega_1|$ is preserved for \eqref{MuT}.
\vspace{1mm}
\item[{\bf (b)}] The area functional $|\Gamma|$ is a strict Lyapunov functional
 for \eqref{Mu} and \eqref{MuT}.
\vspace{-3mm} 
\item[{\bf (c)}] The {\rm (}non-degenerate{\rm )} equilibria for \eqref{Mu} consist of $m$ disjoint spheres of arbitrary radii.
$\cE$ is a smooth manifold of dimension $m(n+1)$.
\vspace{1mm}
\item[{\bf (d)}] The {\rm (}non-degenerate{\rm )} equilibria for \eqref{MuT} consist of $m$ disjoint spheres of the same radius.
$\cE$ is a smooth manifold of dimension $mn+1$.
\vspace{1mm}
\item[{\bf (e)}] Each equilibrium $\Gamma_*\in\cE$ is stable for \eqref{Mu}.
\vspace{1mm}
\item[{\bf (f)}] An equilibrium $\Gamma_* \in\cE$ is stable for \eqref{MuT} if $m=1$,
and unstable if $m>1$.
\end{enumerate}
\end{theorem}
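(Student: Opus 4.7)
Parts (a) and (b) I would dispatch by direct computation. For (a), the transport formula gives $\frac{d}{dt}|\Omega_{1,j}| = \int_{\Gamma_j} V_\Gamma\,d\sigma$ where $\Gamma_j = \partial\Omega_{1,j}$. For \eqref{Mu}, $V_\Gamma = -k_1\partial_\nu\pi_1$ on $\Gamma_j$, and harmonicity of $\pi_1$ on the bounded domain $\Omega_{1,j}$ yields $\int_{\Gamma_j}\partial_\nu\pi_1 = 0$ by the divergence theorem, so each component volume is preserved. For \eqref{MuT} the flux through an individual $\Gamma_j$ need not vanish, but integrating $[\![\varrho k\partial_\nu\pi]\!]$ over all of $\Gamma$ and using harmonicity on $\Omega_1$ and on $\Omega_2$ together with $\partial_\nu\pi_2 = 0$ on $\partial\Omega$ yields $\int_\Gamma V_\Gamma = 0$, i.e.\ the total volume is preserved. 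For (b) I would compute $\frac{d}{dt}|\Gamma| = -\int_\Gamma H_\Gamma V_\Gamma\,d\sigma$, substitute $H_\Gamma = [\![\pi]\!]/\sigma$ with $[\![k\partial_\nu\pi]\!] = 0$ (resp.\ pass to \eqref{MuT-scaled} where $H_\Gamma = \frac{[\![\varrho]\!]}{\sigma} p$ and $[\![p]\!] = 0$), and apply Green's identity on each phase. This produces
\begin{equation*}
\frac{d}{dt}|\Gamma| \;=\; -\frac{1}{\sigma}\sum_{i=1}^{2} k_i \int_{\Omega_i}|\nabla\pi_i|^2 \;\le\; 0
\end{equation*}
for \eqref{Mu}, and $\frac{d}{dt}|\Gamma| = -\frac{1}{\sigma}\sum_i \varrho_i^2 k_i \int_{\Omega_i}|\nabla p_i|^2 \le 0$ for \eqref{MuT-scaled}. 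Equality forces $\nabla\pi_i = 0$ (resp.\ $\nabla p_i = 0$) in each phase, hence $V_\Gamma = 0$, confirming strictness.

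Parts (c) and (d) I would derive from Alexandrov's soap bubble theorem. At an equilibrium of \eqref{Mu}, $V_\Gamma = 0$ forces $\partial_\nu\pi_i = 0$ on $\Gamma$; combined with $\Delta\pi_i = 0$ in $\Omega_i$ (and $\partial_\nu\pi_2 = 0$ on $\partial\Omega$) this makes $\pi_1$ constant on each $\Omega_{1,j}$ and $\pi_2$ constant on $\Omega_2$. Hence $[\![\pi]\!]$ is locally constant on $\Gamma$, so $H_\Gamma$ is constant on each component $\Gamma_j$, and by Alexandrov each $\Gamma_j$ is a round sphere. Radii may be chosen independently subject to pairwise disjointness and $\bar\Omega_1 \subset \Omega$; this admissible configuration space is an open subset of dimension $m$ (radii) $+\, mn$ (centers) $= m(n+1)$. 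For \eqref{MuT}, the analogous argument applied to $p$, together with $[\![p]\!] = 0$, forces $p$ to be globally constant on $\Omega$; then $p = \frac{\sigma}{[\![\varrho]\!]} H_\Gamma$ yields a common mean curvature on every component and hence a common radius, giving dimension $mn + 1$.

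For (e) and (f) I would invoke the generalized principle of linearized stability in the normally stable/normally hyperbolic framework already introduced in the paper. For \eqref{Mu}, the $m$ independent conservation laws $|\Omega_{1,j}| = \mathrm{const}$ cut out a submanifold near $\Gamma_*$ whose tangent space is exactly $m(n+1)$-dimensional and coincides with the kernel of the linearization; $|\Gamma|$ attains a strict constrained minimum on $\cE$, which is normal stability and yields stability of every $\Gamma_* \in \cE$. The difficulty concentrates in (f). For \eqref{MuT} only one conservation law is available while $\dim\cE = mn + 1$; when $m = 1$ the dimensions again match and the normal stability argument goes through. For $m > 1$, varying the $m$ radii independently while preserving only the \emph{total} volume produces $m - 1$ additional admissible directions transverse to $\cE$, and along these directions $|\Gamma|$ strictly decreases (mass prefers to migrate from small to large components, an Ostwald-ripening effect). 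The technical crux is to turn this heuristic into a rigorous spectral statement: one must analyse the linearized operator $\sigma G_{\Gamma_*} H'_{\Gamma_*}$ on the disjoint union of $m$ spheres, diagonalize it via the eigenfunctions of $-\Delta_{\Gamma_*}$ together with the Dirichlet-to-Neumann-type structure of $G_{\Gamma_*}$, and extract exactly $m - 1$ positive eigenvalues with an associated spectral gap. Once this is in place, the generalized principle of linearized stability delivers normal hyperbolicity with a nontrivial unstable manifold and hence the claimed instability of every $\Gamma_* \in \cE$.
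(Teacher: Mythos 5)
Your parts (a)--(d) are correct and, modulo packaging, follow the paper's route: the paper first encodes both flows as $V_\Gamma=\sigma G_\Gamma H_\Gamma$ with $G_\Gamma$ symmetric, positive semi-definite, and with kernel ${\rm span}\{{\sf e}_1,\dots,{\sf e}_m\}$ for \eqref{Mu} and ${\rm span}\{{\sf e}\}$ for \eqref{MuT}, and your Green's-identity computations are exactly what one obtains by unwinding \eqref{G-scalar-M}, \eqref{G-scalar-MT} and \eqref{NG-M}, \eqref{NG-MT}. Part (e) is in the right spirit, but ``the conservation laws cut out a submanifold whose tangent space coincides with the kernel of the linearization'' is not literally meaningful (the level set of the volume functionals has finite codimension, not finite dimension), and ``$|\Gamma|$ attains a strict constrained minimum'' is not by itself the spectral condition of normal stability. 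The paper instead verifies the spectral conditions directly: eigenfunctions for $\lambda\neq0$ are automatically orthogonal to each ${\sf e}_j$ (pair the eigenvalue equation with ${\sf e}_j$ and use $G_\Sigma{\sf e}_j=0$), the spectrum is real because $G_\Sigma$ and $\cA_\Sigma$ are self-adjoint, $\cA_\Sigma$ is positive semi-definite on the orthogonal complement of ${\rm span}\{{\sf e}_j\}$, and $0$ is semi-simple --- the last point being one you do not address at all.

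The genuine gap is in (f), and you flag it yourself: for $m>1$ you never exhibit an unstable eigenvalue, you only describe what a proof ``would have to'' do. The route you propose --- diagonalizing $\sigma G_{\Gamma_*}H_{\Gamma_*}'$ via spherical harmonics and the Dirichlet-to-Neumann structure of $G_{\Gamma_*}$ --- is moreover harder than necessary, since $G_{\Gamma_*}$ couples the $m$ spheres through the common exterior domain and does not diagonalize in any explicit basis. The paper closes this step without any diagonalization: for \eqref{MuT}, $G_\Sigma$ is positive definite and invertible on $L_{2,0}(\Sigma)$, so a nonzero $\lambda$ with $\lambda h+\sigma G_\Sigma\cA_\Sigma h=0$ corresponds to a nontrivial kernel of the self-adjoint family $B_\lambda=\lambda G_\Sigma^{-1}+\sigma\cA_\Sigma$; at $\lambda=0$ this family is negative definite on the $(m-1)$-dimensional space of mean-zero combinations of the ${\sf e}_j$ (on which $\cA_\Sigma=-(n-1)/R^2$), while for large $\lambda>0$ it is positive definite, so exactly $m-1$ eigenvalues cross zero as $\lambda$ runs from $0$ to $\infty$. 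Combined with the reality of the spectrum this yields normal hyperbolicity with an $(m-1)$-dimensional unstable subspace, and only then does nonlinear instability follow from the results of \cite[Chapter 5]{PrSi16}. Without this (or an equivalent) argument your proof of (f) is incomplete.
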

\noindent
More precise statements for the assertions in {\bf (e)} and {\bf (f)} are given in Proposition~\ref{pro:normal}
and Theorem~\ref{geom-stability} below. 

It is interesting to note that the {\em Mullins-Sekerka problem}, given by
\begin{equation}
\label{MS}
\begin{aligned}
\Delta \theta &= 0  &&\mbox{in} && \Omega\setminus\Gamma(t), \\
\partial_\nu \theta &= 0 &&\mbox{on} &&\partial\Omega, \\
\lj \theta \rj &=0 &&\mbox{on} && \Gamma(t),\\
\theta &= \sigma H_\Gamma &&\mbox{on} && \Gamma(t),\\
V_\Gamma  & =- [\![d\partial_\nu\theta]\!] &&\mbox{on} && \Gamma(t),\\
\Gamma(0) & = \Gamma_0,  
\end{aligned}
\end{equation}
enjoys the same geometric properties as the Muskat flow with phase transition \eqref{MuT},
see \cite{EsSi98a, PrSi16}.
Problems \eqref{Mu}, \eqref{MuT}, and \eqref{MS}
are all of order 3, and their principal linearizations have equivalent symbols.
Finally, we note that the Mullins-Sekerka problem~\eqref{MS} has the same set of equilibria
as problem \eqref{MuT}, with analogous stability properties.

Problem \eqref{MS} is also known as the quasi-stationary Stefan problem with surface tension
and it describes the motion of a material with phase transition,
with $\theta$ the temperature and $d_i$ the respective (constant) diffusion coefficients.
We refer again to the monograph \cite{PrSi16} for a comprehensive discussion
of the physical background. 

\medskip
The Muskat problem has recently received considerable attention. 
In the case of $\sigma>0$,
the first result on the existence of classical solutions in two dimensions
was obtained by {Hong, Tao and Yi~\cite{HoYi97}.
Regarding the stability of equilibria,
Friedmann and Tao~\cite{FrTa03} proved
stability of a circular steady-state in case that $\Omega_2$ is unbounded.
The authors of [13] state
that the equilibrium is in general not asymptotically stable.

Escher and Matioc~\cite{EsMa11} considered the Muskat problem in a horizontally periodic geometry
with surface tension and gravity included.
Existence and uniqueness of classical  solutions is obtained
and the authors establish exponential stability of certain flat
equilibria. Using bifurcation theory they also identify finger shaped steady-states which
are all unstable.
These results were later refined and extended in
Ehrnstr{\"o}m, Escher, Matioc, Walker~\cite{EEM13, EMM12a, EMW15}.
 Bazaliy and Vasylyeva~\cite{BaVa11} first observed
a waiting time behavior for the two-dimensional Muskat problem
with a non-regular initial surface in the presence of surface tension.

There is an extensive literature for the case of zero surface tension in two dimensions
for vertically superposed fluids.
It is well-known that in this case the problem can be ill-posed.
This situation occurs when the Rayleigh-Taylor condition is not satisfied, that is, 
when the heavier fluid lies above the lighter one, or when the more viscous fluid  pushes the less viscous one.
 Without commenting in more detail we mention the  work of Ambrose~\cite{Amb04},
Escher, Matioc, Walker~\cite{EsMa11,EMM12a,EMW15}, 
Berselli, C{\'o}rdoba, Granero-Belinch{\'o}n~\cite{BCGb14},
Castro, Constantin, C\`ordoba, Fefferman, Gancedo, L{\'o}pez-Fern{\'a}ndez, 
Strain~\cite{CCFG13,CCFG12,CCFGLf11,CCGS13,CCG09,CCG11,CoGa07},
Cheng, Granero-Belinch{\'o}n, Shkoller~\cite{CGbS16}, 
C{\'o}rdoba, Granero-Belinch{\'o}n, Orive-Illera~\cite{CGO14}, 
C{\'o}rdoba, G{\'o}mez-Serrano, Zlato{\v{s}} \cite{CGsZ15a, CGsZ15b},
Constantin, Gancedo, Shvydkoy, Vicol~\cite{CGSV15},
Siegel, Caflisch and Howison~\cite{SCW04}, and Yi~\cite{Yi96,Yi03} 
for various aspects concerning existence of solutions, breakdown of smoothness, finite time turning,
and stability shifting.

The Muskat problem with phase transition \eqref{MuT} 
has been introduced for the first time in \cite{PrSi16}.

Throughout this paper, we use the notation
$B_X(x,r)$ for a ball or radius $r$ and center $x$, with $X$ a normed vector space.
For two given normed vector spaces $X$ and $Y$, $\cB(X,Y)$ denotes the space of all
bounded linear operators from $X$ into $Y$, equipped with the uniform operator norm.
\section{Elliptic transmission problems}
\noindent
In this section we consider an elliptic transmission problem which turns out
to be important for the analysis of the Muskat flow~\eqref{Mu}.

Suppose that $\Omega\subset\RR^n$ is a bounded domain with $C^2$-boundary, consisting of two parts $\Omega_1$ and $\Omega_2$,
as depicted in Figure 1.
Moreover, suppose that $\Gamma=\partial\Omega_1$ is $C^2$,
and $a\in C^1_{ub}(\Omega\setminus \Gamma)$ with $a(x)\ge \alpha>0$ for $x\in \Omega\setminus\Gamma$.
The following elliptic transmission problem, whose formulation is more general than actually needed for this paper,
is also of independent interest.
\begin{equation}
\label{T1-omega}
\begin{aligned}
\omega u- {\rm div}(a\nabla u) &= f   &&\mbox{in} &&  \Omega\setminus\Gamma,\\
\partial_\nu u &=0  &&\mbox{on} && \partial\Omega,\\
[\![u]\!] &= g_1  &&\mbox{on} && \Gamma,\\
[\![a\partial_\nu u]\!] &=g_2  &&\mbox{on} && \Gamma,\\
\end{aligned}
\end{equation}
with $\omega\in\R$.
\begin{proposition}
\label{pro-T1-omega}
Let $1<p<\infty$. Then there exists $\omega_0\in\R$ such that the transmission problem \eqref{T1-omega}
has for each $\omega>\omega_0$ and each
$$(f,g_1,g_2)\in L_p(\Omega)\times W^{2-1/p}_p(\Gamma)\times W^{1-1/p}_p(\Gamma)$$
a unique solution
$u\in W^2_p(\Omega\setminus\Gamma)$.
\end{proposition}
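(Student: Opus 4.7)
The plan is to reduce the problem to a homogeneous one (zero transmission data) and then combine a variational argument with $L_p$-regularity theory obtained via localization to model problems; invertibility for large $\omega$ follows from a standard energy estimate.

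\textbf{Step 1 (Reduction to homogeneous data).} First I would construct a lifting $\bar u\in W^2_p(\Omega\setminus\Gamma)$ satisfying $[\![\bar u]\!]=g_1$, $[\![a\partial_\nu\bar u]\!]=g_2$ on $\Gamma$, and $\partial_\nu\bar u=0$ on $\partial\Omega$. Since $\Gamma,\partial\Omega\in C^2$ and $a\in C^1_{ub}$, standard trace/extension theory for $W^s_p$ spaces applies: one picks two independent extensions off $\Gamma$ into $\Omega_i$ realizing prescribed Dirichlet and Neumann traces in $W^{2-1/p}_p$ and $W^{1-1/p}_p$, respectively, glued with a cutoff that vanishes near $\partial\Omega$. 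Writing $u=\bar u+v$, the problem for $v$ has $g_1=g_2=0$ and modified right-hand side $\tilde f := f-\omega\bar u+\mathrm{div}(a\nabla\bar u)\in L_p(\Omega)$.

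\textbf{Step 2 (A priori $W^2_p$ estimate via localization).} With $[\![v]\!]=0$, $v$ is in $H^1(\Omega)$ and the operator $Av:=-\mathrm{div}(a\nabla v)$ is of divergence form with a strongly elliptic symbol that jumps across $\Gamma$. The key technical point is an a priori estimate
\begin{equation*}
\|v\|_{W^2_p(\Omega\setminus\Gamma)}\le C\bigl(\|(\omega+A)v\|_{L_p(\Omega)}+\|v\|_{L_p(\Omega)}\bigr).
\end{equation*}
I would prove this by a partition of unity, treating three classes of charts: interior charts (classical $L_p$-theory for second-order elliptic operators), boundary charts near $\partial\Omega$ (a half-space Neumann problem with $C^1$ coefficients, again covered by standard theory), and interface charts near $\Gamma$, where after flattening $\Gamma$ and freezing the coefficients one obtains the \emph{flat two-phase model problem} on $\RR^n$ with a piecewise-constant diffusion coefficient, continuity of $v$ and of $a\partial_n v$ across $\{x_n=0\}$. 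This model problem can be solved explicitly via Fourier transform in the tangential variables: in each half-space the solution is an $e^{-\xi|x_n|}$-type exponential, and matching the two transmission conditions at $x_n=0$ yields an explicit resolvent whose symbol gives $L_p$-bounds by Mikhlin's multiplier theorem. Freezing-of-coefficients and commutator arguments of Agmon–Douglis–Nirenberg type then transfer these estimates back to variable coefficients.

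\textbf{Step 3 (Existence and inversion for large $\omega$).} Having the a priori estimate, I would use the continuity method to connect the transmission operator $\omega+A$ to a reference operator for which invertibility is evident, e.g.\ a one-phase operator obtained by deforming $a$ to a globally $C^1$ coefficient on $\Omega$; the family $s\mapsto \omega+A_s$ ($s\in[0,1]$) remains strongly elliptic with uniform constants, so the a priori estimate is uniform and the set of $s$ for which $\omega+A_s$ is surjective is open and closed, hence equal to $[0,1]$. To handle the lower-order term $\|v\|_{L_p}$ in the estimate, I would multiply the homogeneous equation by $|v|^{p-2}v$ and integrate by parts: using $a\ge\alpha>0$, continuity of $v$ and of $a\partial_\nu v$ across $\Gamma$ (so boundary terms on $\Gamma$ cancel), and $\partial_\nu v=0$ on $\partial\Omega$, one gets $\omega\|v\|_{L_p}^p\le \|\tilde f\|_{L_p}\|v\|_{L_p}^{p-1}$, i.e.\ $\omega\|v\|_{L_p}\le\|\tilde f\|_{L_p}$. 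Choosing $\omega_0$ so that the $\|v\|_{L_p}$-term in the a priori estimate can be absorbed, I obtain unique solvability for all $\omega>\omega_0$.

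\textbf{Main obstacle.} The core difficulty is the interface model problem: verifying that the explicit resolvent symbol yields $L_p$-boundedness for all $1<p<\infty$ with a constant depending only on $\alpha$ and $\|a\|_\infty$, and showing that the standard freezing-of-coefficients and cutoff arguments go through despite the jump discontinuity of $a$ across $\Gamma$. Everything else — the reduction, the energy estimate, and the continuity method — is then routine.
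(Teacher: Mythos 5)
Your proposal is correct, and its overall architecture (reduction to a flat two-phase model problem, freezing of coefficients, localization, perturbation) is the standard one that the paper also follows; the genuine difference lies in how the flat model problem \eqref{MTMP} is handled and in how existence is ultimately produced. The paper folds the two half-spaces onto one via $\tilde u(x,y)=[u(x,y),u(x,-y)]^{\sf T}$, turning the transmission problem into a diagonal elliptic \emph{system} \eqref{TMTMP} on $\R^n_+$ whose boundary conditions encode the two jump conditions; it then checks the Lopatinskii--Shapiro condition and invokes the general solvability theory for elliptic systems, so inhomogeneous data $(g_1,g_2)$ in the trace classes are handled automatically and no separate existence argument is needed. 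You instead first lift $(g_1,g_2)$ to reduce to homogeneous jumps, solve the model problem directly by partial Fourier transform and multiplier/ADN-type estimates (which amounts to verifying the Lopatinskii determinant $a_1\omega_1+a_2\omega_2\neq 0$ by hand), and then supply existence via the method of continuity combined with the $|v|^{p-2}v$ energy estimate for large $\omega$. Your route is more self-contained and makes the absorption of lower-order terms explicit, at the cost of more bookkeeping; the paper's reflection trick buys immediate access to off-the-shelf theory. One point to tidy up in your Step 3: if you deform $a$ to a constant coefficient, the transmission condition $[\![a_s\partial_\nu v]\!]=0$, and hence the domain of $A_s$, moves with $s$; the method of continuity should therefore be applied to the full boundary-value operator $v\mapsto(\omega v-{\rm div}(a_s\nabla v),[\![v]\!],[\![a_s\partial_\nu v]\!],\partial_\nu v|_{\partial\Omega})$ acting between fixed spaces, rather than to $\omega+A_s$ on an $s$-dependent domain. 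This is a routine reformulation and does not affect the validity of the argument.
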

\begin{proof}
Here we give a sketch of the proof, and refer to \cite[Chapter 6]{PrSi16}
for more details. 

\medskip\noindent
{\bf (a)}
We first consider the case with constant coefficients $a_1,a_2$,
flat interface $\Gamma = \RR^{n-1}\times\{0\}=\R^{n-1}$,
and $\Omega_i=\{(x,y)\in \R^{n-1}\times \R: (-1)^iy>0\}$.
Then the problem reads
\begin{equation}
\begin{aligned}
\label{MTMP}
\omega u -a \Delta u &= f, &&y\neq 0,\\
[\![u]\!] &= g_1  &&\mbox{on} \;\; \Gamma,\\
[\![a\partial_\nu u]\!] &=g_2  &&\mbox{on} \;\; \Gamma,\\
\end{aligned}
\end{equation}
with $\nu=e_{n}$ the  outer unit normal of $\Omega_1$.

To obtain solvability of the problem in the right regularity class, 
we transform the problem to the half-space case as follows. Set
\begin{equation*}
\begin{aligned}
\tilde{u}(x,y)&=[u(x,y),u(x,-y)]^{\sf T}, \\
 \tilde{f}(x,y)&=[f(x,y),f(x,-y)]^{\sf T}, 
\end{aligned}
\end{equation*}
for $(x,y)\in\RR^{n-1}\times (0,\infty)$,
and consider the problem
\begin{equation}
\begin{aligned}
\label{TMTMP}
\omega \tilde u -{\rm diag}\,[a_2\Delta, a_1\Delta] \tilde{u}&=\tilde{f} &&\text{in} && \R^n_+,\\
\tilde{u}_2-\tilde{u}_1&=g_1&&\text{on} && \Gamma,\\
a_2\partial_y\tilde{u}_2 + a_1\partial_y\tilde{u}_1 &=g_2 &&\text{on} && \Gamma,\\
\end{aligned}
\end{equation}
where the subscripts $1,2$ refer to the coefficients in the lower resp.\ upper half-space.
Problem \eqref{TMTMP} is strongly elliptic and satisfies the Lopatinskii-Shapiro condition for the half space.
By well-known results for elliptic systems, see for instance \cite[Section 6.3]{PrSi16},
this problem is uniquely solvable in the right class, hence the transmission problem \eqref{MTMP} has this property as well. This proves Proposition \ref{pro-T1-omega} for the constant coefficient case with flat interface.
\medskip\\
{\bf (b)} By perturbation, the result for the flat interface with constant coefficients remains valid for variable coefficients with small deviation from constant ones. By another perturbation argument, 
a proper coordinate transformation transfers the result to the case of a bent interface.
The localization technique finally yields the result for the case of general domains and general coefficients,
see for instance \cite{PrSi16} Section 6.3 for more details.
\end{proof}
The transmission problem
\begin{equation}
\label{T1-A}
\begin{aligned}
\Delta u &= 0   &&\mbox{in} &&  \Omega\setminus\Gamma,\\
\partial_\nu u &=0  &&\mbox{on} && \partial\Omega,\\
\lj u \rj &= h  &&\mbox{on} && \Gamma,\\
\lj k\partial_\nu u \rj &=0  &&\mbox{on} && \Gamma.\\
\end{aligned}
\end{equation}
is closely related to the Muskat problem~\eqref{Mu}.
As in Section 1, $k_i= k|_{\Omega_i}$ is assumed to be constant for $i=1,2.$
We have the following result on solvability.
\begin{proposition} 
\label{pro:T1}
Let $1<p<\infty$.
Then the elliptic transmission problem \eqref{T1-A} has for each $h\in W^{2-1/p}_p(\Gamma)$
a unique solution $u\in W^2_p(\Omega\setminus \Gamma)\cap L_{p,0}(\Omega)$, where
$$L_{p,0}(\Omega)=\{v\in L_p(\Omega): \int_\Omega u\,dx=0\}. $$
\end{proposition}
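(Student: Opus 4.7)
The plan is to combine \propref{pro-T1-omega} with a Fredholm alternative to handle the case $\omega=0$, and then eliminate the resulting one-dimensional kernel via the zero-mean normalization $u\in L_{p,0}(\Omega)$. Since $k$ is piecewise constant on $\Omega\setminus\Gamma$, we have $\mathrm{div}(k\nabla u)=k\Delta u$ in each $\Omega_i$. Introduce
\[
X:=\{u\in W^2_p(\Omega\setminus\Gamma)\st \partial_\nu u|_{\partial\Omega}=0,\ \lj k\partial_\nu u\rj|_\Gamma=0\},\qquad Y:=L_p(\Omega)\times W^{2-1/p}_p(\Gamma),
\]
and define $T_\omega\in\cB(X,Y)$ by $T_\omega u:=(\omega u-k\Delta u,\ \lj u\rj)$. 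By \propref{pro-T1-omega} (applied with $a=k$ and $g_2=0$), $T_\omega$ is an isomorphism for every $\omega>\omega_0$. Writing $T_0=T_\omega-\omega K$ with $Ku:=(u,0)$, the compactness of the embedding $X\hookrightarrow L_p(\Omega)$ via Rellich--Kondrachov shows that $K$ is compact, so $T_0$ is a compact perturbation of an isomorphism and hence Fredholm of index zero.

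Next one identifies $\ker T_0$. If $T_0 u=0$ then $u$ is harmonic on each $\Omega_i$ with homogeneous boundary and jump conditions; standard elliptic bootstrap gives $u\in C^\infty(\bar\Omega_i)$ so Green's first identity applies. Multiplying $\Delta u=0$ by $u$ and integrating over each $\Omega_i$ weighted by $k_i$, the Neumann condition eliminates the $\partial\Omega$-contributions, and on $\Gamma$ the two jump conditions $\lj u\rj=0$ and $\lj k\partial_\nu u\rj=0$ make the boundary contributions from $\Omega_1$ and $\Omega_2$ cancel. This yields $\sum_i k_i\int_{\Omega_i}|\nabla u|^2=0$, so $u$ is constant on every connected component of $\Omega\setminus\Gamma$. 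Since $\Omega_2$ is connected and every component $\Omega_{1,j}$ is adjacent to $\Omega_2$ across a component of $\Gamma$, continuity $\lj u\rj=0$ forces $u$ to be a single global constant, hence $\ker T_0=\RR\cdot 1$.

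For the range, the divergence theorem applied componentwise to any $u\in X$ gives
\[
\int_\Omega k\Delta u\,dx=k_1\int_\Gamma\partial_{\nu_\Gamma}u_1-k_2\int_\Gamma\partial_{\nu_\Gamma}u_2=-\int_\Gamma\lj k\partial_\nu u\rj=0,
\]
so any $(f,h)\in\mathrm{Ran}(T_0)$ must satisfy $\int_\Omega f\,dx=0$. This nontrivial linear functional on $Y$ annihilates $\mathrm{Ran}(T_0)$, and since $\mathrm{codim}\,\mathrm{Ran}(T_0)=1$ by the Fredholm index, it fully characterizes the range. Problem~\eqref{T1-A} corresponds to $T_0 u=(0,h)$, for which the compatibility $\int_\Omega 0=0$ is trivially satisfied; a solution $u\in X$ therefore exists and is unique modulo additive constants, and the constraint $u\in L_{p,0}(\Omega)$ selects a unique representative. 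The main point requiring care is the identification of the cokernel: rather than computing $T_0^*$ directly, one exhibits the single nontrivial compatibility functional $(f,h)\mapsto\int_\Omega f$ and promotes it to a full description of $\mathrm{Ran}(T_0)$ via the Fredholm-index-zero count.
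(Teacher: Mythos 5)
Your proof is correct, but it is organized differently from the paper's. The paper also starts from \propref{pro-T1-omega}, but instead of running a Fredholm alternative on the full transmission operator it makes a two-step construction: first it solves the problem with $\omega_1 u-k\Delta u=0$ in place of $\Delta u=0$ (for $\omega_1$ large) to absorb the inhomogeneous jump $h$, obtaining a solution $u_1\in L_{p,0}(\Omega)$; then it corrects by solving $-k\Delta\tilde u=\omega_1 u_1$ with homogeneous transmission conditions, which it does by showing that the operator $Lv=-k\Delta v$ with homogeneous conditions has compact resolvent, spectrum in $\mathbb{R}_+$ by an energy argument, and $0$ in its resolvent set once one restricts to $L_{p,0}(\Omega)$; the sum $u=u_1+\tilde u$ is the desired solution. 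Your route instead treats $u\mapsto(-k\Delta u,\lj u\rj)$ as a single operator $T_0$, obtains Fredholmness of index zero by compact perturbation of the invertible $T_\omega$, identifies the kernel as the constants by the same energy argument, and pins down the range as $\{\int_\Omega f\,dx=0\}$ by exhibiting one compatibility functional and counting dimensions. The two arguments use the same ingredients (the large-$\omega$ result, compactness from $X\hookrightarrow L_p(\Omega)$, the energy identity, and the divergence-theorem identity $\int_\Omega k\Delta u\,dx=-\int_\Gamma\lj k\partial_\nu u\rj\,d\Gamma$, which the paper uses implicitly to see $u_1\in L_{p,0}(\Omega)$ and that $L$ maps into $L_{p,0}(\Omega)$); yours buys an explicit description of the cokernel and makes the solvability condition for general right-hand sides visible, while the paper's construction sidesteps any discussion of the range at the cost of an extra auxiliary problem. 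One small point to tighten: for $p<2$ your appeal to Green's identity needs $u\in W^2_2$, and "bootstrap to $C^\infty(\bar\Omega_i)$" overstates what a $C^2$ interface allows; it suffices to upgrade kernel elements to $W^2_q$ for all $q$ by iterating \propref{pro-T1-omega} with data $(\omega u,0,0)$ (this is also what underlies the paper's remark that the eigenvalues of $L$ are independent of $p$).
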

\goodbreak
\begin{proof}
By Proposition~\ref{pro-T1-omega} we know that problem~\eqref{T1-A}, with the first line
replaced by 
$$\omega_1 u-k\Delta u=0,$$ 
has for each $h\in W^{2-1/p}_p(\Gamma)$ a unique solution $u_1\in W^2_p(\Omega\setminus\Gamma)$,
provided $\omega_1$ is sufficiently large.
In addition, one readily verifies that $u_1\in L_{p,0}(\Omega)$.
Next we show that the problem
\begin{equation}
\label{T1-tilde}
\begin{aligned}
-k\Delta \tilde u &= \omega_1 u_1   &&\mbox{in} &&  \Omega\setminus\Gamma,\\
\partial_\nu \tilde u &=0  &&\mbox{on} && \partial\Omega,\\
\lj \tilde u \rj &= 0  &&\mbox{on} && \Gamma,\\
\lj k\partial_\nu\tilde u \rj &=0  &&\mbox{on} && \Gamma,\\
\end{aligned}
\end{equation}
has a unique solution $\tilde u\in W^2_p(\Omega\setminus\Gamma)\cap L_{p,0}(\Omega)$.
In order to see this, let $X=L_{p,0}(\Omega)$ and consider the linear operator 
${L}:{\sf D}({L})\subset X\to X$ given by
\begin{equation*}
\begin{aligned}
{\sf D}({L}): &=\{v\in W^2_p(\Omega\setminus\Gamma)\cap X: 
\partial_\nu v=0\;\;\mbox{on}\;\;\partial\Omega,
\;\; \lj v \rj = \lj k\partial_\nu v\rj =0\;\;\mbox{on}\;\;\Gamma\}, \\
   {L}v: &= -k\Delta v,\quad v\in {\sf D}({L)}.
\end{aligned}
\end{equation*}
Then ${L}$ has compact resolvent and therefore, its spectrum consists only of eigenvalues of finite 
algebraic multiplicity which, in addition, do not depend on $p$. By a standard energy argument we obtain
$\sigma({L})\subset {\mathbb R}_+$. 
The fact that we restrict ourselves to functions with mean zero
implies that $0$ lies in the resolvent set of ${L}$.
Therefore, \eqref{T1-tilde} has a unique solution $\tilde u \in W^2_p(\Omega\setminus\Gamma)\cap L_{p,0}(\Omega)$.
It is now clear that the function $u=u_1 + \tilde u$
satisfies the assertions of the proposition.
\end{proof}
\section{Volume, area, and equilibria} 
In this section we show that the Muskat problems \eqref{Mu} and \eqref{MuT}
enjoy some important geometric properties, namely conservation of volume and decrease of surface area.
Moreover, we characterize all the equilibria.
We start by showing that both problems \eqref{Mu} and \eqref{MuT-scaled}
can be rewritten as
\begin{equation}
\label{geom}
V_\Gamma=\sigma G_\Gamma H_\Gamma,\quad t>0,\quad \Gamma(0)=\Gamma_0,
\end{equation}
a geometric evolution equation for the motion of $\Gamma(t)$.
Here 
$$G_\Gamma: W^{2-1/q}_q(\Gamma)\to W^{1-1/q}_q(\Gamma),\quad 1<q<\infty, $$ 
is linear and satisfies
\begin{equation}
\label{G-Gamma-properties}
(G_\Gamma g|h)_{L_2(\Gamma)}=(g|G_\Gamma h)_{L_2(\Gamma)},\quad 
(G_\Gamma h|h)_{L_2(\Gamma)}\ge 0, \quad \; g,h\in W^{3/2}_2(\Gamma).
\end{equation}
This can be seen as follows.
Given $h\in W^{3/2}_2(\Gamma)$, let $p\in W^2_2(\Omega\setminus\Gamma)\cap L_{2,0}(\Omega)$ be the 
unique solution of the elliptic transmission problem
\begin{equation}
\label{T1}
\begin{aligned}
\Delta p &= 0   &&\mbox{in} &&  \Omega\setminus\Gamma,\\
\partial_\nu p &=0  &&\mbox{on} && \partial\Omega,\\
[\![p]\!] &= h  &&\mbox{on} && \Gamma,\\
[\![k\partial_\nu p]\!] &=0  &&\mbox{on} && \Gamma,\\
\end{aligned}
\end{equation}
see Proposition~\ref{pro:T1}, and let 
$$G_\Gamma h:=-k\partial_\nu p.$$
It is now clear that \eqref{geom} is equivalent to \eqref{Mu}.
For $g,h\in W^{3/2}_2(\Gamma)$ let $p(g),p(h)\in W^2_2(\Omega\setminus\Gamma)\cap L_{2,0}(\Omega)$ 
be the corresponding solutions of \eqref{T1}.
Then one readily verifies that
\begin{equation}
\label{G-scalar-M}
(G_\Gamma g|h)_{L_2(\Omega)}=(g|G_\Gamma h)_{L_2(\Omega)}=\int_{\Omega} k \nabla p(g)\nabla p(h)\,dx,
\end{equation}
showing that $G_\Gamma$ satisfies \eqref{G-Gamma-properties}.

For the Muskat problem with surface tension~\eqref{MuT-scaled}
we proceed as follows.
 Given $h\in W^{3/2}_2(\Gamma)$, let $p_i\in W^{2}_2(\Omega_i)$ be the
unique solution of the elliptic problem
\begin{equation}
\label{T2-1}
\begin{aligned}
 \Delta p_1 &=0 &&\mbox{in} && \Omega_1, \\
    p_1 &=h &&\mbox{on} &&\Gamma, 
\end{aligned}
\end{equation}
respectively
\begin{equation}
\label{T2-2}
\begin{aligned}
\Delta p_2 &=0 && \mbox{in} && \Omega_2, \\
 \partial_\nu p_2 &=0 && \mbox{on} &&\partial\Omega, \\
p_2 &=h &&\mbox{on} &&\Gamma.
\end{aligned}
\end{equation}
Setting $S_1h:= k_1\partial_\nu p_1$, $S_2h=-k_2\partial_\nu p_2$, and 
$$G_\Gamma h:= \frac{1}{[\![\varrho]\!]^2}(\varrho_1^2 S_1 + \varrho_2^2 S_2)h,$$
we see that the Muskat problem \eqref{MuT-scaled} can be rewritten as \eqref{geom}.
For $g,h\in W^{3/2}_2(\Gamma)$, let $p_i(g), p_i(h)\in W^2_2(\Omega_i)$  be the corresponding solutions
of \eqref{T2-1} and \eqref{T2-2}, respectively. Then one verifies that
\begin{equation}
\label{G-scalar-MT}
(G_\Gamma g|h)_{L_2(\Omega)}=(g|G_\Gamma h)_{L_2(\Omega)}
=\frac{1}{[\![\varrho]\!]^2}\int_\Omega \varrho^2 k\nabla p(g)\cdot\nabla p(h)\,dx,
\end{equation}
and this shows that \eqref{G-Gamma-properties} also holds for \eqref{MuT-scaled}.

Let $\Omega_{1,j}$, $j=1,\ldots,m$, denote the components of $\Omega_1$ and $\Gamma_j$ their boundaries,
and let $\Gamma:=\bigcup_{j=1}^m\Gamma_j$.
Moreover, let ${\sf e}=\chi_{_\Gamma}$ and ${\sf e}_j=\chi_{_{\Gamma_j}}$, where $\chi_{_A}$ denotes
the indicator function of the set $A$.
With \eqref{G-scalar-M} is is not difficult to see that
\begin{equation}
\label{NG-M}
{\sf N}\,(G_\Gamma)={\rm span}\,\{{\sf e}_1,\ldots, {\sf e}_m\}
\end{equation}
for the Muskat problem~\eqref{Mu}, whereas
\begin{equation}
\label{NG-MT}
{\sf N}(G_\Gamma)={\rm span}\,\{{\sf e}\}
\end{equation}
for the Muskat problem with phase transition~\eqref{MuT-scaled}.

\bigskip
\noindent
We are now ready for the {\bf proof of Theorem 1.1(a)-(b)}:

\medskip
\noindent
{\bf (a)}
Let $|\Omega_1(t)|$ denote the volume of $\Omega_1(t)$.
By the change of volume formula, see for instance \cite[Section 2.5]{PrSi16}, 
and \eqref{NG-MT} we obtain
\begin{equation*}
\frac{d}{dt}|\Omega_1(t)|=\int_\Gamma V_\Gamma\,d\Gamma = \sigma\int_{\Gamma}G_\Gamma H_\Gamma\,d\Gamma
=\sigma(H_\Gamma|G_\Gamma {\sf e})_{L_2(\Gamma)}=0.
\end{equation*}
For problem~\eqref{Mu} we obtain by \eqref{NG-M} 
\begin{equation*}
\frac{d}{dt}|\Omega_{1,j}(t)|=\int_{\Gamma_j} V_\Gamma\,d\Gamma = \sigma\int_{\Gamma_j}G_\Gamma H_\Gamma\,d\Gamma
=\sigma(H_\Gamma|G_\Gamma {\sf e}_j)_{L_2(\Gamma)}=0.
\end{equation*}
{\bf (b)-(d)}
Let $|\Gamma(t)|$ denote the surface area of $\Gamma(t)$. 
By the change of area formula, see for instance \cite[Section 2.5]{PrSi16}, 
and~\eqref{G-Gamma-properties} we have
\begin{equation*}
\frac{d}{dt}|\Gamma(t)|= -\int_\Gamma V_\Gamma H_\Gamma \,d\Gamma 
= -\sigma \int_\Gamma (G_\Gamma H_\Gamma)H_\Gamma\,d\Gamma
=-\sigma (G_\Gamma H_\Gamma|H_\Gamma)_{L_2(\Gamma)}\le 0,
\end{equation*}
showing that $|\Gamma|$ is decreasing,
and hence is a Lyapunov function, for \eqref{Mu} and \eqref{MuT-scaled}.
But more is true: $\Phi(\Gamma):=|\Gamma|$ is a strict Lyapunov function.
To see this, suppose that $\frac{d}{dt}\Phi(\Gamma)=0$ for some time $t$. 
Then $(G_\Gamma H_\Gamma|H_\Gamma)=0$.
Let $p$ be the solution of \eqref{T1} with $h= H_\Gamma$;
by~\eqref{G-scalar-M} we obtain
$$
(G_\Gamma h|h)_{L_2(\Omega)}=\int_\Omega k|\nabla p|^2\,d\Gamma=0,
$$
showing that $p$ is constant on $\Omega_2$ and on the connected components $\Omega_{1,j}$ of $\Omega_1$.
Therefore, $h$ is constant on  $\Gamma_j$, that is, 
$h=\sum_{j=1}^m a_j {\sf e}_j$ with some real numbers $a_j$.
This implies that $H_\Gamma$ is constant on each component $\Gamma_j$ of $\Gamma$,
and by Alexandrov's characterization of compact closed hypersurfaces with constant mean curvature,
$\Gamma$ is the union of disjoint spheres, which may all have different radii.
This, in turn, also yields that the equilibria for \eqref{Mu}
consist of disjoint spheres of arbitrary radii.
One shows that $\cE$, the set of all equilibria, is a smooth manifold of dimenion $m(n+1)$,
see for instance~\cite{EsSi98a, PrSi16}.

For the Muskat problem with phase transition we proceed analogously:
let $p_i$, $i=1,2$, be the solution of \eqref{T2-1} and \eqref{T2-2}, respectively. 
Then \eqref{G-scalar-MT} implies that $p_i$ is constant on the connected components of 
$\Omega$. The condition $[\![p]\!]=0$ in turn shows that $p\equiv c$ on $\Omega$,
and this implies that the mean curvature $H_\Gamma$ is constant all over $\Gamma$.
Consequently, $\Gamma$ is the disjoint union of spheres of the same radius
and $\cE$ has dimension $mn+1$.
\hfill{$\square$}
\section{Well-posedness}
In this section, we show that the evolution equation~\eqref{geom}
admits a unique solution which instantaneously regularizes, provided $\Gamma_0\in W^s_p$
with $s>2+(n-1)/p$.

In order to establish this result, we use the common approach of transforming problems
\eqref{Mu} and \eqref{MuT-scaled}, or equivalently problem \eqref{geom},
to a domain with a fixed interface $\Sigma$, where
$\Gamma(t)$ is parameterized over $\Sigma$ by means of a height function $h(t)$.
For this we rely on the {\em Hanzawa transform}, see for instance
\cite[Section 1.3.2]{PrSi16}.

We assume, as before, that $\Omega\subset\R^n$ is a bounded domain with boundary $\partial\Omega$ of class $C^2$,
and that
$\Gamma\subset\Omega$ is a hypersurface of class $C^2$,
i.e.,\ a $C^2$-manifold which is the boundary of a bounded domain
$\Omega_1\subset\Omega$. As above, we set
$\Omega_2=\Omega\backslash\bar{\Omega}_1$, see again Figure 1.
If follows from the results in \cite[Section 2.3.4]{PrSi16}, see also \cite{PrSi13}, that 
$\Gamma$ can be approximated by a real analytic hypersurafce $\Sigma$, in the sense that the Hausdorff distance of the second order
normal bundles is as small as we please. More precisely, given $\eta>0$, there exists an analytic hypersurface
$\Sigma$ such that $d_H(\cN^2\Sigma,\cN^2\Gamma)\leq \eta$. If $\eta>0$ is small enough,
 then $\Sigma$ bounds a domain $\Omega_1^\Sigma$ with $\overline{\Omega^\Sigma_1}\subset\Omega$ and then we set $\Omega^\Sigma_2=\Omega\setminus\overline{\Omega^\Sigma_1}\subset\Omega$.

In the sequel we will freely use the results from \cite[Chapter 2]{PrSi16}.
In particular, we know that the hypersurface $\Sigma$ admits a tubular neighborhood,
which means that there is $a_0>0$ such that the map
\begin{eqnarray*}
&&\Lambda: \Sigma \times (-a_0,a_0)\to \R^n \\
&&\Lambda(p,r):= p+r\nu_\Sigma(p)
\end{eqnarray*}
is a diffeomorphism from $\Sigma \times (-a_0,a_0)$
onto ${\rm im}(\Lambda)$, the image of $\Lambda$. The inverse
$$\Lambda^{-1}:{\rm im}(\Lambda)\to \Sigma\times (-a_0,a_0)$$ of this map
is conveniently decomposed as
$$\Lambda^{-1}(x)=(\Pi_\Sigma(x),d_\Sigma(x)),\quad x\in{\rm im}(\Lambda).$$
Here $\Pi_\Sigma(x)$ means the metric projection of $x$ onto $\Sigma$ and $d_\Sigma(x)$ the signed
distance from $x$ to $\Sigma$; so $|d_\Sigma(x)|={\rm dist}(x,\Sigma)$ and $d_\Sigma(x)<0$ if and only if
$x\in \Omega_1^\Sigma$. In particular we have ${\rm im}(\Lambda)=\{x\in \R^n:\, {\rm dist}(x,\Sigma)<a_0\}$.
The maximal number $a_0$ is given by the radius $r_\Sigma>0$, defined as the largest number $r$ such the exterior and interior ball conditions for $\Sigma$ in $\Omega$ holds.

If ${\rm dist}(\Gamma,\Sigma)$ is small enough,
we may use the map $\Lambda$ to parameterize the unknown free
boundary $\Gamma(t)$ over $\Sigma$ by means of a {height function}
$h(t)$ via
$$\Gamma(t)=\{p+ h(t,p)\nu_\Sigma(p):p\in\Sigma\},\quad t\geq0,$$
for small $t\ge 0$, at least.
We then extend this diffeomorphism to all of $\bar\Omega$ by means of a Hanzawa transform.
With the Weingarten tensor $L_\Sigma$ and the surface gradient
 $\nabla_\Sigma$ we further have
\begin{equation*}
\begin{aligned}
& \nu_\Gamma (h)= \beta(h)(\nu_\Sigma-a(h)),&&  a(h)= M_0(h)\nabla_\Sigma h,\\
& M_0 (h)=(I-hL_\Sigma)^{-1},&& \beta(h) = (1+| a(h)|^2)^{-1/2},
\end{aligned}
\end{equation*}
and
$$V_\Gamma= (\nu_\Sigma\cdot\nu_\Gamma)\partial_t h=\beta(h)\partial_t h.$$
The transformed problem then reads
\begin{equation}\label{geom-trans}
\beta(h)\partial_t h - \sigma G_\Gamma(h)H_\Gamma(h)=0,\quad t>0, \quad h(0)=h_0.
\end{equation}
Recalling the quasilinear structure of $H_\Gamma(h)$ we may apply \cite[Theorem 5.1.1]{PrSi16} 
to the transformed problem. In order to do so, we set
\begin{equation}
\label{spaces}
X_0:=W^{1-1/p}_p(\Sigma),\quad X_1:=W^{4-1/p}_p(\Sigma),\quad X_{\gamma,\mu}:=W^{1+3\mu-4/p}(\Sigma),
\end{equation}
with $\mu\in (1/3+(n+3)/3p,1]$.
Here we note that this choice of $\mu$ implies the embedding
$X_{\gamma,\mu}\hookrightarrow  C^2(\Sigma)$,
showing that the mean curvature $H_\Gamma(h)$ is well-defined.
\begin{theorem}
\label{geom-locex}
Let $p\in(1,\infty)$, and let the spaces $X_0$, $X_1$, and $X_{\gamma,\mu}$ be defined 
as in \eqref{spaces}.

Then \eqref{geom} is locally well-posed in the sense that the transformed problem \eqref{geom-trans}
is locally well-posed for initial values $h_0\in X_{\gamma,\mu}$ which are small in the topology of $C^1(\Sigma)$.
Furthermore, the map $t\mapsto \Gamma(t)$ is real analytic.
\end{theorem}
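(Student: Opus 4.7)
The plan is to cast \eqref{geom-trans} as a quasilinear abstract Cauchy problem
\[
\partial_t h + \mathcal{A}(h)h = \mathcal{F}(h), \qquad h(0)=h_0,
\]
in $X_0=W^{1-1/p}_p(\Sigma)$, where $\mathcal{A}(h)\in\mathcal{B}(X_1,X_0)$ captures the third-order principal part of $-\sigma\beta(h)^{-1}G_\Gamma(h)H_\Gamma(h)$ and $\mathcal{F}(h)$ collects the lower-order contributions, and then apply the quasilinear maximal regularity theorem \cite[Theorem 5.1.1]{PrSi16}. The choice of $X_0,X_1$ is made so that its time-trace space equals exactly $X_{\gamma,\mu}$; the lower bound on $\mu$ yields $X_{\gamma,\mu}\hookrightarrow C^2(\Sigma)$, which guarantees that the mean curvature and the Hanzawa coefficients $\beta(h),a(h),M_0(h)$ are well-defined and depend smoothly on $h$ in a small $C^1$-neighborhood of $0$.

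First I would identify the linearization $A_0:=\mathcal{A}'(0)$. Standard expansions from \cite[Chapter 2]{PrSi16} give $\beta(0)=1$ and $H_\Gamma'(0)=-(\Delta_\Sigma+c_\Sigma)$ for a zeroth-order term $c_\Sigma$, while Proposition~\ref{pro:T1} identifies $G_\Gamma(0)=G_\Sigma$ as a Dirichlet--Neumann type operator of order one on $\Sigma$. Consequently, modulo lower order, $A_0=\sigma G_\Sigma(-\Delta_\Sigma)$ is a pseudo-differential operator of order three on $\Sigma$.

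The main obstacle is to verify that $A_0$ has maximal $L_p$-regularity on $X_0$. I would use a partition of unity and local coordinates to reduce to the constant-coefficient flat model on $\mathbb{R}^{n-1}$, where $G_\Sigma$ has symbol $|\xi|$ and $-\Delta_\Sigma$ has symbol $|\xi|^2$, giving principal symbol $\sigma|\xi|^3$, strongly elliptic of order three. Together with the Lopatinskii--Shapiro analysis already carried out for the transmission problem in Proposition~\ref{pro-T1-omega}, this yields $\mathcal{R}$-sectoriality of $A_0+\omega$ for $\omega>0$ sufficiently large (the shift absorbs the $m$-dimensional kernel spanned by the ${\sf e}_j$ coming from \eqref{NG-M}) via the operator-valued Mihlin theorem. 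Smoothness of the Hanzawa parametrization and continuous dependence of the elliptic transmission problem on $\Gamma(h)$ then show that $h\mapsto\mathcal{A}(h)\in\mathcal{B}(X_1,X_0)$ is locally Lipschitz, so $\mathcal{A}(h_0)$ is a small perturbation of $A_0$ and inherits maximal regularity whenever $h_0$ is small in $C^1(\Sigma)$. The abstract theorem then delivers the unique local solution $h\in H^1_{p,\mu}(J;X_0)\cap L_{p,\mu}(J;X_1)$, which embeds into $C(J;X_{\gamma,\mu})$ and regularizes instantaneously.

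Finally, the real analyticity of $t\mapsto\Gamma(t)$ is obtained by the parameter trick: introduce the rescaled unknown $h_\lambda(t,p):=h(\lambda t,p)$ for $\lambda$ in a complex neighborhood of $1$, verify that $(\lambda,h)\mapsto \beta(h)\partial_t h-\lambda\sigma G_\Gamma(h)H_\Gamma(h)$ is jointly real analytic as a map between the appropriate maximal regularity spaces, and apply the analytic implicit function theorem at $\lambda=1$ to obtain an analytic family $\lambda\mapsto h_\lambda$. Differentiating in $\lambda$ and using $\partial_\lambda h_\lambda|_{\lambda=1}=t\partial_t h$ transfers the analyticity in $\lambda$ to analyticity of $h$, hence of $\Gamma(\cdot)$, in the time variable $t$.
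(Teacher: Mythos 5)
Your proposal is correct and follows essentially the same route as the paper: recast \eqref{geom-trans} as a quasilinear problem $\partial_t h+A(h)h=F(h)$ with $A(h)$ the third-order part $-\sigma G_\Sigma(h)c_0(h,\nabla_\Sigma h){:}\nabla^2_\Sigma h$, establish $L_p$-maximal regularity of $A(0)=-\sigma G_\Sigma\Delta_\Sigma$ (the paper cites Corollaries 6.6.5 and 6.7.4 of \cite{PrSi16} where you sketch the localization and symbol computation), and invoke the abstract quasilinear theorems together with the parameter trick for analyticity in time. The only cosmetic difference is that the paper insists on real analyticity (not just local Lipschitz continuity) of $h\mapsto(A(h),F(h))$ on a ball in $X_{\gamma,\mu}$, which you in any case need and implicitly use in your analytic implicit function theorem step.
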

\begin{proof}
We want to rewrite \eqref{geom-trans} as a quasilinear evolution equation
\begin{equation*}
 \partial_t h +A(h)h = F(h), \quad t>0,\quad h(0)=h_0,
\end{equation*}
where  $h_0$ is small in $C^1(\Sigma)$. 
We recall the representation of the curvature $H_\Gamma$ from \cite[Section 2.2.5]{PrSi16}, 
which reads
$$ H_\Gamma(h) = \beta(h)(c_0(h,\nabla_\Sigma h):\nabla_\Sigma^2h +c_1(h,\nabla_\Sigma h)),$$
where $c_0$ and $c_1$ are real analytic functions, $c_0(0,0)=I$, $c_1(0,0)=H_\Sigma$, and  $-H_\Gamma$ is strongly elliptic 
if $h$ is small in $C^1(\Sigma)$.
Next one shows that the map 
$$ C^2(\Sigma)\to \cB(W^{2-1/p}_p(\Sigma),W^{1-1/p}_p(\Sigma)),\quad h\mapsto G_\Gamma(h),$$
 is real analytic, provided $h$ is small with respect to the topology of $C^1(\Sigma)$.
Furthermore, we write $\beta(h)^{-1}G_\Gamma(h)=G_\Sigma(h)$,
resulting in the problem
\begin{equation}
\label{geom-trans-2}
\partial_t h -\sigma G_\Sigma(h)H_\Gamma(h)=0,\quad t>0, \quad h(0)=h_0.
\end{equation}
Here we note that $G_\Sigma$ is a linear pseudo-differential operator of order 1 on $\Sigma$
for both Muskat problems \eqref{Mu} and \eqref{MuT-scaled}.
We use the decomposition
\begin{equation*}
\begin{aligned} \label{geom ev- AF}
-\sigma G_\Sigma(h)H_\Gamma(h) &= -\sigma G_\Sigma(h) c_0(h,\nabla_\Sigma h):\nabla_\Sigma^2 h
-\sigma G_\Sigma(h)c_1(h,\nabla_\Sigma h)  \\
                    &=:A(h)h - F(h).
\end{aligned}
\end{equation*}
By the techniques developed in \cite[Section 9.5]{PrSi16}, it is not difficult to show that
\begin{equation*}
(A,F): B_{X_{\gamma,\mu}}(0,r)\to \cB(X_1,X_0)\times X_0
\end{equation*} 
is real analytic, provided $r>0$ is small enough. 
Key for this is the embedding $X_{\gamma,\mu}\hookrightarrow C^2(\Sigma)$ which is ensured by the choice of $\mu$. 
It remains to show that $A(h)$ has the property of $L_p$-maximal regularity.

In order to see this, we note that
$$A(0)g= -\sigma G_\Sigma\Delta_\Sigma g, \quad g\in W^{4-1/p}_p(\Sigma),$$
where $\Delta_\Sigma$ is the Laplace-Beltrami operator on $\Sigma$.
It follows from Corollaries 6.6.5 and 6.7.4 in \cite{PrSi16} that
the operator $-A(h)$ with domain ${\sf D}(A(h))=X_1$ has $L_p$-maximal regularity in $X_0$
for both problems~\eqref{Mu} and~\eqref{MuT} for each $h\in B_{X_{\gamma,\mu}}(0,r)$,
provided $r$ is sufficiently small.
Therefore, Theorems~5.1.1 and~5.2.1 in \cite{PrSi16} apply to obtain local well-posedness as well as analyticity in time. 
For analyticity in space we may follow the arguments presented in \cite[Section 9.4]{PrSi16}.
\end{proof}

\section{Stability of equilibria}
Recall that the equilibria of \eqref{Mu} and \eqref{MuT} consist of finitely many  spheres $\Sigma_j:=S(x_j,R_j)$, $1\le j\le m$.
Given such an equilibrium $\Gamma_*=\bigcup_{j=1}^m\Sigma_j$, we choose $\Sigma=\Gamma_*$ as the reference hypersurface.  
The linearization of the transformed problem then reads
\begin{equation}\label{geom-lin}
\partial_t h + \sigma G_\Sigma\cA_\Sigma h=0,
\end{equation}
where
$$\cA_\Sigma\Big|_{\Sigma_j} = -H^\prime_\Gamma(0)\Big|_{\Sigma_j}=-\frac{n-1}{R^2_j}-\Delta_{\Sigma_j},\qquad j=1,\ldots,m,$$
with $R_j$ the radius of the sphere $\Sigma_j$, and $\Delta_{\Sigma_j}$ the Laplace-Beltrami operator of $\Sigma_j$.
This follows from the fact that the Fr\'echet derivative of
$G_\Sigma(h)H_\Gamma(h)$ at $h=0$
(in the direction of $g$) can be evaluated by
\begin{equation*}
\begin{aligned}
\frac{d}{d\ep}\Big|_{\ep=0}G_\Sigma(\ep g)H_\Gamma(\ep g)
=\frac{d}{d\ep}\Big|_{\ep=0}G_\Sigma(\ep g)H_\Gamma(0)+G_\Sigma(0)\frac{d}{d\ep}\Big|_{\ep=0}H_\Gamma(\ep g)
=-G_\Sigma \cA_\Sigma g,
\end{aligned}
\end{equation*}
as $H_\Sigma=H_\Gamma(0)$ is constant on equilibria, and $G_\Sigma(\ep g) {\sf e}=0$.
As the operator $-G_\Sigma\cA_\Sigma$  has maximal regularity, we may apply the stability results 
from~\cite[Chapter 5]{PrSi16}, once we have shown that 0 is normally stable or normally hyperbolic for 
\eqref{geom-trans-2}. 

Before showing the latter we recall the pertinent definitions.
Let
$L:=\sigma G_\Sigma \cA_\Sigma$
be the linearization of $-\sigma G_\Sigma(h)H_\Gamma(h)$ at the equilibrium $h=0$.

Then $0$ is called {\em normally stable} for~\eqref{geom-trans-2}, if
\begin{itemize}
\item[(i)] 
\, near $0$ the set of equilibria $\cF$ is a finite-dimensional $C^1$-manifold in $X_1$,
\item[(ii)] 
\, the tangent space for $\cF$ at $0$ is isomorphic to ${\sf N}(L)$,
\item[(iii)] 
\, $0$ is a semi-simple eigenvalue of $L$, i.e.\ ${\sf R}(L)\oplus {\sf N}(L)=X_0$,
\item[(iv)] 
\, $\sigma(-L)\setminus\{0\}\subset \C_-=\{z\in\C:\, {\rm Re}\, z<0\}$.
\end{itemize}
Moreover, $0$ is {\em normally hyperbolic} if property (iv) is replaced by
\begin{itemize}
\item[(iv$^\prime$)] $\sigma(L)\cap i\R =\{0\}$, $\quad\sigma(-L)\cap \C_+\neq\emptyset$.
\end{itemize}
\noindent
Finally, we say that an equilibrium $\Gamma_*\in\cE$ is normally stable, respectively normally hyperbolic,
for \eqref{geom} 
if $h_*=0$ is normally stable, repspectively normally hyperbolic for the corresponding transformed problem~\eqref{geom-trans-2}
with reference surface $\Sigma =\Gamma_*$. 

\medskip
We are ready to prove the following important result.
\begin{proposition}
\label{pro:normal}
\hfill{ }
\begin{enumerate}
\item[{\bf (i)}] Each equilibrium $\Gamma_*\in\cE$ is normally stable for \eqref{Mu}.
\vspace{1mm}
\item[{\bf (ii)}] An equilibrium $\Gamma_*\in\cE$ is normally stable for \eqref{MuT} if $m=1$,
and normally hyperbolic if $m>1$.
\end{enumerate}
\end{proposition}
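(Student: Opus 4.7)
The plan is to analyze the linearization $L := \sigma G_\Sigma \cA_\Sigma$ at $h=0$, where $\cA_\Sigma|_{\Sigma_j} = -(n-1)/R_j^2 - \Delta_{\Sigma_j}$. Since $-\Delta_{\Sigma_j}$ on a sphere of radius $R_j$ has eigenvalues $k(k+n-2)/R_j^2$ for $k\ge 0$, expansion in spherical harmonics decomposes $L_2(\Sigma) = E_- \oplus E_0 \oplus E_+$ into $\cA_\Sigma$-eigenspaces: $E_- = \mathrm{span}\{\mathsf{e}_1,\ldots,\mathsf{e}_m\}$ (from $k=0$, eigenvalue $-(n-1)/R_j^2$ on each $\Sigma_j$), $E_0 = \mathrm{span}\{Y^j_{1,\ell}: 1\le j\le m,\, 1\le\ell\le n\}$ (the kernel, spanned by first-order spherical harmonics on each component), and $E_+$ from $k\ge 2$ (positive). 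Since $Lh=0$ iff $\cA_\Sigma h \in \mathsf{N}(G_\Sigma)$, resolving this mode by mode yields $\mathsf{N}(L) = E_- \oplus E_0$ for \eqref{Mu} and $\mathsf{N}(L) = \mathrm{span}\{\mathsf{e}\} \oplus E_0$ for \eqref{MuT} (the $k=0$ components on different $\Sigma_j$ must agree in the MuT case because all $R_j$ coincide at equilibrium). Via $\mathsf{e}_j \leftrightarrow$ radial expansion of $\Sigma_j$ and $Y^j_{1,\ell} \leftrightarrow$ translation of $\Sigma_j$ in direction $e_\ell$, this identifies $\mathsf{N}(L) \cong T_{\Gamma_*}\cE$, with dimensions $m(n+1)$ and $mn+1$, confirming properties (i) and (ii).

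Next I would exploit the block structure of $L$ on the $L_2$-orthogonal splitting $L_2(\Sigma) = \mathsf{N}(G_\Sigma) \oplus \mathsf{R}(G_\Sigma)$: a direct computation shows $\cA_\Sigma$ preserves $\mathsf{N}(G_\Sigma)$ in both cases, so $L \equiv 0$ on $\mathsf{N}(G_\Sigma)$, and $\mathsf{R}(L) \subseteq \mathsf{R}(G_\Sigma)$ makes $\mathsf{R}(G_\Sigma)$ invariant. On $\mathsf{R}(G_\Sigma)$, the restriction $L_R$ is similar via $G_\Sigma^{1/2}$ to the $L_2$-self-adjoint operator $\sigma G_\Sigma^{1/2} \cA_\Sigma G_\Sigma^{1/2}$; combined with compactness of the resolvent (from $X_1 \hookrightarrow X_0$ compactly), $L_R$ has discrete real spectrum with all eigenvalues semi-simple. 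Block-diagonality then gives property (iii) for the full $L$.

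For the sign information, pairing $\sigma G_\Sigma \cA_\Sigma h = \lambda h$ with $\cA_\Sigma h$ in $L_2$ yields
\begin{equation*}
\sigma (G_\Sigma \cA_\Sigma h \mid \cA_\Sigma h)_{L_2} = \lambda (\cA_\Sigma h \mid h)_{L_2},
\end{equation*}
with the left-hand side strictly positive whenever $\lambda \neq 0$, so $\sign(\lambda) = \sign((\cA_\Sigma h \mid h))$. Sylvester's inertia applied to the similarity then counts negative eigenvalues of $L$ by the negative index of the form $h \mapsto (\cA_\Sigma h \mid h)$ on $\mathsf{R}(G_\Sigma)$. For \eqref{Mu}, $\mathsf{R}(G_\Sigma) = E_-^\perp = E_0 \oplus E_+$ and the form is non-negative, so $L$ has no negative eigenvalues, giving (iv) and statement (i). For \eqref{MuT} with $m=1$ one has $\mathrm{span}\{\mathsf{e}\} = E_-$ and the same argument gives normal stability. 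For \eqref{MuT} with $m>1$, the subspace $E_- \cap \mathrm{span}\{\mathsf{e}\}^\perp$ has dimension $m-1$ and lies in $\mathsf{R}(G_\Sigma)$, with the form strictly negative there; this produces exactly $m-1$ negative eigenvalues of $L$, and together with semi-simple zero and discrete real spectrum yields (iv'), hence normal hyperbolicity.

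The main delicate point I expect is justifying rigorously the similarity $L_R \sim \sigma G_\Sigma^{1/2} \cA_\Sigma G_\Sigma^{1/2}$ and applying Sylvester's inertia in the correct Sobolev-space setting, given that $G_\Sigma$ is a pseudo-differential operator of order one whose square root requires some care. A clean bypass for (iii) alone is: if $L^2 h = 0$ then $v := Lh \in \mathsf{N}(L) \cap \mathsf{R}(G_\Sigma)$, and in both cases this intersection reduces to $E_0 = \mathsf{N}(\cA_\Sigma)$, so $\cA_\Sigma v = 0$; setting $u := \cA_\Sigma h$ and using symmetry of $\cA_\Sigma$ and $G_\Sigma$,
\begin{equation*}
\sigma (G_\Sigma u \mid u)_{L_2} = (Lh \mid \cA_\Sigma h)_{L_2} = (v \mid \cA_\Sigma h)_{L_2} = (\cA_\Sigma v \mid h)_{L_2} = 0,
\end{equation*}
and positive semi-definiteness of $G_\Sigma$ forces $G_\Sigma u = 0$, i.e., $Lh = \sigma G_\Sigma u = 0$.
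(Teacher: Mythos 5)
Your proposal is correct and, for most of the argument, follows the same route as the paper: the kernel of $L=\sigma G_\Sigma\cA_\Sigma$ is computed from the condition $\cA_\Sigma h\in{\sf N}(G_\Sigma)$ exactly as in the paper (your spherical-harmonics bookkeeping merely makes the identification with $T_{\Gamma_*}\cE$ and the dimension count more explicit); your ``clean bypass'' for semi-simplicity is, up to notation, the paper's own argument (testing $(G_\Sigma\cA_\Sigma)^2h=0$ against the ${\sf e}_l$ and then against $\cA_\Sigma h$, and using semi-definiteness of $G_\Sigma$); and the reality of the spectrum together with the sign of nonzero eigenvalues, obtained by pairing the eigenvalue equation with $\cA_\Sigma h$ and using that $\cA_\Sigma\ge 0$ on ${\sf N}(G_\Sigma)^\perp$, is verbatim the paper's step (c). The one place you genuinely diverge is the count of unstable modes for \eqref{MuT} with $m>1$: you conjugate $L$ on ${\sf R}(G_\Sigma)$ by $G_\Sigma^{1/2}$ to the selfadjoint operator $\sigma G_\Sigma^{1/2}\cA_\Sigma G_\Sigma^{1/2}$ and count negative eigenvalues by Sylvester inertia/min--max on the $(m-1)$-dimensional subspace $E_-\cap\{{\sf e}\}^{\perp}$, whereas the paper runs a homotopy $B_\lambda=\lambda G_\Sigma^{-1}+\sigma\cA_\Sigma$ from $\lambda=0$ (negative index $m-1$) to large $\lambda$ (positive definite) and counts eigenvalue crossings. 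The two devices are of comparable delicacy --- yours requires the square root of a first-order pseudodifferential operator and an infinite-dimensional inertia statement, the paper's requires continuity of the eigenvalue branches of $B_\lambda$ --- and both yield exactly $m-1$ positive eigenvalues of $-L$; note that for normal hyperbolicity as stated, the existence of a single unstable eigenvalue already suffices, so either argument does the job.
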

\begin{proof}
It follows from our previous considerations that the set of of equilibria form a smooth manifold.
Next we note that $G_\Sigma\cA_\Sigma$ has compact resolvent by boundedness of $\Omega$, 
so we only need to consider its eigenvalues.

\medskip

\noindent
{\bf (a)} We begin with eigenvalue 0. So let $G_\Sigma\cA_\Sigma h=0$. 
Then $\cA_\Sigma h$ belongs to the kernel of $G_\Sigma$, which implies by \eqref{NG-MT} that $\cA_\Sigma h =a {\sf e}$ 
in case \eqref{MuT}, and
$\cA_\Sigma h = \sum_{j=1}^m a_j {\sf e}_j$ in case \eqref{Mu}, see \eqref{NG-M}.
\\
Therefore,  $h=h_0-(R^2/(n-1))a{\sf e}$ for \eqref{MuT}, 
and $h=h_0-\sum_{j=1}^m(R_j^2/(n-1))a_j{\sf e}_j$
in case of \eqref{Mu}, where $h_0\in {\sf N}(\cA_\Sigma)$.
As ${\rm dim}\,{\sf N}(\cA_\Sigma)=mn$, we conclude 
 that the dimension of the kernel ${\sf N}(G_\Sigma\cA_\Sigma)$ equals the dimension of the manifold $\cE$. 
\medskip

\noindent
{\bf (b)} To see that the eigenvalue 0 is semi-simple for $G_\Sigma\cA_\Sigma$,  suppose $(G_\Sigma\cA_\Sigma)^2h=0$. 
Then for \eqref{Mu}
$$ G_\Sigma\cA_\Sigma h = h_0 + \sum_{j=1}^m a_j {\sf e}_j,\quad \mbox{ for some } h_0\in {\sf N}(\cA_\Sigma),\; a_j\in \C.$$
Multiplying this relation with ${\sf e}_l $ in $L_2(\Sigma)$ we obtain $a_j=0$ for all $j$, as $G_\Sigma$ is selfadjoint and 
$G_\Sigma{\sf e}_j=0$. As $\cA_\Sigma$ is also selfadjoint, multiplying with $\cA_\Sigma h$, we obtain
$(G_\Sigma\cA_\Sigma h|\cA_\Sigma h)_{L_2(\Sigma)}=0$, hence $G_\Sigma\cA_\Sigma h=0$. 
The argument for \eqref{MuT} is similar. Consequently, 0 is semi-simple for $G_\Sigma\cA_\Sigma$.

\medskip

\noindent
{\bf (c)} Now suppose that $\lambda\in\C$, $\lambda\neq0$, is an eigenvalue for $-G_\Sigma\cA_\Sigma$, i.e.,
$$ \lambda h + G_\Sigma\cA_\Sigma h=0,$$
for some nontrivial $h$. Taking the inner product with $\cA_\Sigma h$ in $L_2(\Sigma)$ we obtain
$$ \lambda(h|\cA_\Sigma h)_\Sigma +(G_\Sigma\cA_\Sigma h |\cA_\Sigma h)_\Sigma =0.$$
As $G_\Sigma$ and $\cA_\Sigma$ are selfadjoint, this identity implies that $\lambda$ must be real, hence the spectrum of $G_\Sigma\cA_\Sigma$ is real.

We consider now the case \eqref{Mu}; then $(h|{\sf e}_j)_\Sigma=0$ for all $j=1,\ldots,m$. Suppose $\lambda>0$.
As $G_\Sigma$ is positive semi-definite and $\cA_\Sigma$ is so on
the orthogonal complement of ${\rm span }\{{\sf e}_j\}_{j=1}^m$ we see that $(h|\cA_\Sigma h)=0$.  This implies $\cA_\Sigma h=0$ and then $h=0$ as $\lambda>0$. Therefore, there are no nonzero eigenvalues with nonnegative real part, hence in this case  
$0$ is normally stable.

In case \eqref{MuT}, we only obtain $(h|{\sf e})_\Sigma=0$. As $\cA_\Sigma$ is positive semi-definite on functions with mean zero if and only if $\Sigma$ is connected, we may conclude normal stability, provided $\Sigma$ is connected.

\medskip
\noindent
{\bf (d)} Next we show that $G_\Sigma\cA_\Sigma$ has  exactly $(m-1)$ positive eigenvalues in case \eqref{MuT}, 
provided $\Sigma$ has $m$ components $\Sigma_j$. 
In this case we know that $ G_\Sigma$ is positive semi-definite and invertible on $L_{2,0}(\Sigma)$, hence $ G_\Sigma^{-1}$ is positive definite on this space. Therefore, the operator $B_\lambda = \lambda  G_\Sigma^{-1} +\sigma \cA_\Sigma$ has an $(m-1)$-fold negative eigenvalue for $\lambda=0$ and is positive definite for large $\lambda$. This shows that $(m-1)$ eigenvalues must cross the imaginary axis through zero, as $\lambda$ varies from $0$ to $\infty$.
Consequently, $0$ is normally hyperbolic.
\end{proof}

Now we may apply the nonlinear stability results of \cite[Chapter 5]{PrSi16} to obtain the main result of this section.
\goodbreak
\begin{theorem}
\label{geom-stability} 
Let $\Gamma_*$ be an equilibrium of \eqref{geom}
and suppose $s> 2+(n-1)/p$ is fixed.
Then the following assertions hold.
\begin{enumerate}
\item[{\bf (i)}] Problem~\eqref{Mu}{\rm :}\\
$h_*=0$ is stable for \eqref{geom-trans-2} in $W^s_p(\Gamma_*)$.
Any solution $h$ starting close to $h_*=0$ in $W^s_p(\Gamma_*)$ exists globally and converges to an 
equilibrium $h_\infty$ of \eqref{geom-trans-2} in $W^s_p(\Gamma_*)$
at an exponential rate.
\vspace{1mm}
\item[{\bf (ii)}]  Problem~\eqref{MuT}{\rm :}\\
$h_*=0$ is stable for \eqref{geom-trans-2} in $W^s_p(\Gamma_*)$, 
provided $\Gamma_*$ is connected. In this case, the same assertions as in {\bf (i)} hold.
\\
If $\Gamma_*$ is disconnected, then $h_*=0$ is unstable in $W^s_p(\Gamma_*)$. 
A solution $h$ starting close to $h_*=0$ and staying close to the set of equilibria in the topology of $W^s_p(\Gamma_*)$ exists globally and converges to some equilibrium $h_\infty$ of \eqref{geom-trans-2} in $W^s_p(\Gamma_*)$ at an exponential rate.
\end{enumerate}
In both cases, $h_\infty$ corresponds to some $\Gamma_\infty\in\cE$.
\end{theorem}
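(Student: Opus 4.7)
The plan is to apply the abstract results for quasilinear parabolic problems from \cite[Chapter 5]{PrSi16} to the transformed equation \eqref{geom-trans-2}, with the reference surface chosen as $\Sigma=\Gamma_*$. The groundwork is already in place: Theorem~\ref{geom-locex} provides local well-posedness together with $L_p$-maximal regularity of the linearization $-A(0)=\sigma G_\Sigma \cA_\Sigma$ on $X_0=W^{1-1/p}_p(\Sigma)$ with domain $X_1=W^{4-1/p}_p(\Sigma)$, while Proposition~\ref{pro:normal} identifies the spectral situation at $h_*=0$. For the $W^s_p$-formulation, I would first observe that by choosing $\mu\in(1/3+(n+3)/3p,1]$ sufficiently close to $1$ one has $X_{\gamma,\mu}\hookrightarrow W^s_p(\Sigma)$, so initial data $h_0\in W^s_p(\Gamma_*)$ lies in the trace space of the maximal regularity setting.

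For part \textbf{(i)} and for part \textbf{(ii)} in the connected case $m=1$, Proposition~\ref{pro:normal} says that $h_*=0$ is normally stable: $\cE$ is smooth near $\Gamma_*$, its tangent space at $0$ coincides with ${\sf N}(\sigma G_\Sigma\cA_\Sigma)$, $0$ is a semi-simple eigenvalue, and the rest of the spectrum lies in the open left half-plane. I would then invoke the principle of linearized stability for normally stable equilibria, \cite[Theorem~5.3.1]{PrSi16}, to conclude that $h_*=0$ is Lyapunov stable in $X_{\gamma,\mu}$, and that every solution starting close to it exists globally and converges at an exponential rate to some $h_\infty\in\cE$. Parabolic smoothing propagates this convergence to $W^s_p(\Sigma)$ on any interval $[\tau,\infty)$ with $\tau>0$, and since the statement also requires continuity at $t=0$ in $W^s_p$, one works directly in the larger phase space $W^s_p$ and uses the $L_p$-maximal regularity for that space (again from \cite[Chapter~5]{PrSi16}).

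For the disconnected case in \textbf{(ii)}, Proposition~\ref{pro:normal} shows that $h_*=0$ is normally hyperbolic with exactly $m-1$ eigenvalues in the open right half-plane and the kernel tangent to $\cE$. I would apply the invariant manifold theorem \cite[Theorem~5.4.1]{PrSi16}, which yields local stable, unstable, and center manifolds near $h_*=0$, the center manifold being $\cE$ itself. The presence of an unstable direction immediately gives instability of $h_*=0$. For solutions that stay close to $\cE$, the argument is that they must remain on the $(m-1)$-codimensional center-stable manifold, on which the dynamics contracts exponentially onto $\cE$; this yields global existence together with exponential convergence to some $h_\infty\in\cE$, i.e.\ conditional stability.

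Finally I would translate these results back to the geometric level: since the Hanzawa parametrization is a smooth bijection between a neighborhood of $h_*=0$ in $W^s_p(\Gamma_*)$ and a neighborhood of $\Gamma_*$ in the $W^s_p$-manifold topology, stability, instability, global existence, and exponential convergence transfer verbatim, and $h_\infty$ corresponds to a hypersurface $\Gamma_\infty\in\cE$. The main obstacle I anticipate is a careful bookkeeping between the $X_{\gamma,\mu}$-setting in which \cite[Chapters~5]{PrSi16} is formulated and the $W^s_p$-topology appearing in the statement; this is resolved by the embedding above and by exploiting that $\cE$ is a finite-dimensional $C^\omega$-manifold, so that the notions of closeness in $X_{\gamma,\mu}$, $X_1$, and $W^s_p$ agree on $\cE$.
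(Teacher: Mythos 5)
Your proposal is correct and follows essentially the same route as the paper, which simply invokes the abstract nonlinear stability results for normally stable and normally hyperbolic equilibria from \cite[Chapter 5]{PrSi16}, using Proposition~\ref{pro:normal} for the spectral hypotheses and the maximal regularity established in Theorem~\ref{geom-locex}. Your additional bookkeeping between $X_{\gamma,\mu}$ and $W^s_p$ and the transfer back through the Hanzawa parametrization is exactly what the paper leaves implicit.
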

\noindent
{\bf Proof of Theorem 1.1(e)-(f)}:
The assertions follow  from Theorem~\ref{geom-stability} by means of the transformation alluded to 
at the beginning of Section 4.
\hfill{$\square$}

\bigskip
\noindent
So in conclusion, the Muskat flow with phase transition sees the phenomenon of Ostwald-ripening, 
while the Muskat flow does not share this property. Physically speaking, \eqref{MuT} is spatially non-local 
so that different parts of the surface {see each other}. On the other hand, \eqref{Mu} is also non-local in space, 
but the coupling between different parts of the surface is not strong enough to enable Ostwald-ripening.
\goodbreak
\section{Semiflow and long-time behavior}
It can be shown that the closed $C^2$-hypersurfaces contained in $\Omega$ which bound a region
$\Omega_1\subset\subset\Omega$ form a $C^2$-manifold, denoted by $\cMH^2(\Omega)$,
see for instance \cite{PrSi13} or \cite[Chapter 2]{PrSi16}.
The charts are the normal parameterizations over a reference hypersurface $\Sigma$, and the
tangent space consists of the normal vector fields of $\Sigma$.

We define the state manifold of \eqref{geom} by means of
\begin{equation}\label{geom-statemanif}
\cSM^s(\Omega):=\{\Gamma\in\cMH^2(\Omega)\st \Gamma\in W^{s}_p\},
\quad s>2 + (n-1)/p.
\end{equation}
The topology of $\cSM^s(\Omega)$ is that induced
by the canonical level functions $\varphi_\Gamma$ in $W^{s}_p(\Omega)$,
see \cite[Section 2.4.2]{PrSi16}. 
By Theorem  \ref{geom-locex} we see that given an initial surface $\Gamma_0\in \cSM^s(\Omega)$ we find $a>0$ and $\Gamma:[0,a]\to\cSM^s(\Omega)$ continuous such that $\Gamma(0)=\Gamma_0$ and $\Gamma(\cdot)$ is an $L_p$-solution in the sense that $\Gamma$ is obtained as the push forward of the solution of the transformed problem \eqref{geom-trans}.
We may extend such an orbit in $\cSM^s(\Omega)$ to a maximal time interval $J(\Gamma_0):=[0,t_+(\Gamma_0))$.
Basically there are two facts which prevent the solution from being global, namely
    \begin{itemize}
    \item {\em Regularity}: the norm of $\Gamma(t)$ in $W_p^{s}$ may become unbounded as $t\to t_+(\Gamma_0)$;
    \vspace{-3mm}
    \item {\em Geometry}: the topology of the interface $\Gamma(t)$ may change,
    or the interface may touch the boundary of $\Omega$, or part of it may shrink to points.
    \end{itemize}
We say that the solution $\Gamma(t)$ satisfies a\emph{uniform ball condition},
if there is a number $r>0$ such that for each $t\in J_0:=[0,t_+(\Gamma_0))$ and each $p\in \Gamma(t)$
there are balls $B(x_i,r)\subset \Omega_i$, $i=1,2$, such that
$\bar B(x_i,r)\cap \Gamma(t)=\{p\}$.
 The main result of this section reads as follows.

\begin{theorem}\label{geom-global}
Let  $\Gamma(t)$ be a solution of the geometric evolution equation \eqref{geom} on its maximal time interval $J(\Gamma_0)$. Assume furthermore that
\begin{enumerate}
    \item[{\bf (i)}] $|\Gamma(t)|_{W_p^s}\le M<\infty$ for all $t\in J(\Gamma_0)$, and
    \vspace{1mm}
    \item[{\bf (ii)}] $\Gamma(t)$ satisfies a uniform ball condition.
\end{enumerate}
Then $J(\Gamma_0)=\R_+$, i.e.,\ the solution exists globally, and $\Gamma(t)$ converges in $\cSM^s$ to an equilibrium $\Gamma_\infty\in\cE$ at an exponential rate.
The converse is also true: if a global solution converges in $\cSM^s$ to an equilibrium, then (i)  and (ii) are valid.
\end{theorem}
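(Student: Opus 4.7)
The plan is to prove the theorem in two parts by combining the local well-posedness of Theorem \ref{geom-locex}, the strict Lyapunov structure from Theorem \ref{conserved}(b), and the normally-stable/hyperbolic dichotomy of Proposition \ref{pro:normal}. First I would establish $J(\Gamma_0)=\R_+$, then use a Lyapunov/LaSalle argument to force clustering at $\cE$, and finally invoke the abstract stability machinery of \cite[Chapter 5]{PrSi16} to upgrade clustering to exponential convergence. The converse is nearly immediate from the definitions.

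\textbf{Global existence.} Suppose for contradiction that $t_+:=t_+(\Gamma_0)<\infty$. The uniform ball condition (ii) keeps $\Gamma(t)$ bounded away from $\partial\Omega$, prevents self-intersections, and controls the inradii of $\Omega_i(t)$ from below; combined with (i) it ensures that every $\Gamma(t)$ can be written as a normal graph over a common real-analytic reference surface $\Sigma$ with height function $h(t)$ whose $X_{\gamma,\mu}$-norm is bounded uniformly and whose $C^1(\Sigma)$-norm is small uniformly. Theorem \ref{geom-locex} then yields a \emph{uniform} local existence time $\tau>0$ starting from any such configuration. Picking $t\in J(\Gamma_0)$ with $t+\tau>t_+$ and restarting the local solution produces an extension past $t_+$, contradicting maximality.

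\textbf{Convergence to an equilibrium.} By the compact embedding $W^s_p\hookrightarrow W^{s'}_p$ for any $s'\in(2+(n-1)/p,s)$, the orbit is relatively compact in $\cSM^{s'}$, so its $\omega$-limit set $\omega(\Gamma_0)$ is non-empty and semiflow-invariant. Since $|\Gamma|$ is a strict Lyapunov functional by Theorem \ref{conserved}(b), LaSalle's invariance principle forces $\omega(\Gamma_0)\subset\cE$. Pick $\Gamma_\infty\in\omega(\Gamma_0)$. By Proposition \ref{pro:normal}, $\Gamma_\infty$ is either normally stable (case \eqref{Mu}, or case \eqref{MuT} with $m=1$) or normally hyperbolic (case \eqref{MuT} with $m\ge 2$). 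The abstract theorems in \cite[Chapter 5]{PrSi16} then yield exponential attraction toward $\cE$: in the normally stable case once the orbit enters a small neighborhood of $\Gamma_\infty$, and in the normally hyperbolic case along orbits that stay close to $\cE$. Since $\Gamma_\infty\in\omega(\Gamma_0)$ the orbit visits every neighborhood of $\Gamma_\infty$, and by the Lyapunov monotonicity combined with relative compactness it eventually stays close to $\cE$; hence $\Gamma(t)\to\Gamma_\infty'$ exponentially in $\cSM^s$ for some $\Gamma_\infty'\in\cE$.

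\textbf{Converse.} Assume $\Gamma(t)\to\Gamma_\infty\in\cE$ in $\cSM^s$. Then $t\mapsto\Gamma(t)$ is a continuous curve in $\cSM^s$ with a limit at $t=\infty$, so $\sup_{t\ge 0}\|\Gamma(t)\|_{W^s_p}<\infty$, which is (i). For (ii), note that $\Gamma_\infty$ is a finite disjoint union of spheres strictly inside $\Omega$ and hence admits a uniform ball radius $r_\infty>0$. Because $s>2+(n-1)/p$ the embedding $W^s_p\hookrightarrow C^2$ turns convergence in $\cSM^s$ into $C^2$-convergence of interfaces, so a uniform ball condition with radius $r_\infty/2$ holds for all $t\ge T$ large enough; on the compact interval $[0,T]$ continuity of the orbit furnishes some uniform radius, and the minimum of the two yields (ii).

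The main obstacle will be the translation between the interface formulation (in which (i), (ii) and the topology of $\cSM^s$ are phrased) and the height-function formulation on a fixed reference surface $\Sigma$ (in which the abstract stability theorems of \cite[Chapter 5]{PrSi16} are stated). One must show that the exponential attraction rate can be chosen uniformly as $\Sigma$ varies over a neighborhood within $\cE$, and that convergence of height functions in the spaces $X_{\gamma,\mu}$, $X_1$ is equivalent to convergence of the parametrized interfaces in $\cSM^s$; this is the technical core hidden behind the succinct conclusion of the theorem.
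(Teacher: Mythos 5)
Your proposal is correct and follows essentially the same route as the paper, whose proof consists of invoking \cite[Theorem 4.3]{KPW10} and \cite[Theorems 5.7.2, 11.4.1]{PrSi16} --- i.e.\ exactly the combination of relative compactness (from (i), (ii) and compact Sobolev embedding), a uniform re-start time for the local semiflow, the strict Lyapunov functional forcing $\omega(\Gamma_0)\subset\cE$, and the normally stable/normally hyperbolic alternative of Proposition~\ref{pro:normal} to upgrade to exponential convergence. The only small imprecision is your claim that all $\Gamma(t)$ are graphs over a \emph{single} common reference surface; in general one uses relative compactness to cover the orbit by \emph{finitely many} reference configurations and takes the minimum of the corresponding existence times, but this is a standard adjustment and does not affect the argument.
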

\begin{proof}
The proof relies on \cite[Theorem 4.3]{KPW10} and follows the same lines as that of
Theorem~5.2 in \cite{KPW10}; see also \cite{PrSi16}, Theorems 5.7.2 and 11.4.1.
\end{proof}

\bigskip


\end{document}